\documentclass[a4paper,reqno]{amsart}
\usepackage{amssymb, amsmath, amscd}

\def\mapright#1{\smash{\mathop{\longrightarrow}\limits\sp{#1}}}
\newtheorem{theorem}{Theorem}[section]
\newtheorem{lemma}[theorem]{Lemma}
\newtheorem{remark}[theorem]{Remark}
\newtheorem{definition}[theorem]{Definition}

\newtheorem{example}[theorem]{Example}
\makeatletter
 \@addtoreset{equation}{section}
\makeatother
\begin{document}
\title{Homothety Curvature Homogeneity and Homothety Homogeneity}
\author{E. Garc\'{\i}a-R\'{\i}o, P.  Gilkey  \text{and} S. Nik\v cevi\'c}
\address{EG: Faculty of Mathematics,
University of Santiago de Compostela, Spain}
\email{eduardo.garcia.rio@usc.es}
\address{PG: Mathematics Department, \; University of Oregon, \;\;
  Eugene \; OR 97403 \; USA}
\email{gilkey@uoregon.edu}
\address{SN: Mathematical Institute, Sanu, Knez Mihailova 36, p.p. 367,
11001 Belgrade, Serbia}
\email{stanan@mi.sanu.ac.rs}
\subjclass[2000]{53C50, 53C44}
\keywords{curvature homogeneity, homothety, cohomogeneity one}

\begin{abstract}
We examine the difference between several notions of curvature homogeneity
and show that the notions introduced by Kowalski and Van\v{z}urov\'{a} are
genuine generalizations of the ordinary notion
of $k$-curvature homogeneity.
The homothety group plays an essential role in the analysis. We give a complete
classification of homothety homogeneous manifolds which are not homogeneous
and which are not VSI and show that such manifolds are cohomogeneity one. We also give a
complete description of the local geometry
if the homothety character defines a split extension.
\end{abstract}
\maketitle

\section{Introduction}
Let $\mathcal{M}=(M,g)$ be a pseudo-Riemannian manifold of
dimension $m\ge2$.
Let $\nabla^kR\in\otimes^{k+4}T^*M$ denote
the $k^{\operatorname{th}}$ covariant derivative of the curvature tensor
and let  $\nabla^k\mathfrak{R}\in
\otimes^{k+2}T^*M\otimes\operatorname{End}(TM)$
denote the $k^{\operatorname{th}}$ covariant derivative
of the curvature operator. These are related by the identity:
$$
\nabla^kR(x_1,x_2,x_3,x_4;x_5,\dots,x_{k+4})
=g(\nabla^k\mathfrak{R}(x_1,x_2;x_5,\dots,x_{k+4})x_3,x_4)\,.
$$
\begin{definition}\label{defn-1.1}\rm
$\mathcal{M}$ is said to be {\it k-curvature homogeneous}
if given $P,Q\in M$,  there is a linear isometry
$\phi:T_PM\rightarrow T_QM$ so that
$\phi^*(\nabla^\ell R_Q)=\nabla^\ell R_P$ for $0\le\ell\le k$.
\end{definition}

There is a slightly different version of curvature homogeneity that
we shall discuss here and which,
motivated by the seminal work of
Kowalski and Van\v{z}urov\'{a} \cite{KV11,KV12}, we shall call
{\it homothety $k$-curvature homogeneity}.
In Definition~\ref{defn-1.1}, we may replace the curvature tensor $R$
by the curvature operator $\mathfrak{R}$ since we are
dealing with isometries. This is not the case
when we deal with homotheties and the variance is crucial.
We will establish the following result in Section~\ref{sect-2}:
\begin{lemma}\label{lem-1.2}
The following conditions
are equivalent and if any is satisfied, then $\mathcal{M}$ will be said to be
{\bf homothety $k$-curvature homogeneous}:
\begin{enumerate}
\item Given any two points $P,Q\in M$, there is a linear homothety
$\Phi=\Phi_{P,Q}$ from  $T_PM$ to $T_QM$  so that if $0\le\ell\le k$, then
$\Phi^*(\nabla^\ell\mathfrak{R}_Q)=\nabla^\ell\mathfrak{R}_P$.
\item Given any two points $P,Q\in M$, there exists a linear isometry
$\phi=\phi_{P,Q}$ from $T_PM$ to $T_QM$
and there exists $0\ne\lambda=\lambda_{P,Q}\in\mathbb{R}$
so that if $0\le\ell\le k$, then
$\phi^*(\nabla^\ell R_Q)=\lambda^{-\ell-2}\nabla^\ell R_P$.
\item There exist constants $\varepsilon_{ij}$ and constants
 $c_{i_1\dots i_{\ell+4}}$ such that
 for all $Q\in M$, there is a basis $\{\xi_1^Q,\dots,\xi_m^Q\}$ for $T_QM$ and there exists
 $0\ne\lambda_Q\in\mathbb{R}$ so that if
 $0\le\ell\le k$, then for all $i_1,i_2,\dots$,
 $g_Q(\xi^Q_{i_1},\xi^Q_{i_2})=\varepsilon_{i_1i_2}$ and
 $$
\nabla^\ell R_Q(\xi^Q_{i_1},\xi^Q_{i_2},\xi^Q_{i_3},\xi^Q_{i_4};\xi^Q_{i_5},\dots,\xi^Q_{\ell+4})
 =\lambda_Q^{-\ell-2}c_{i_1\dots i_{\ell+4}}\,.
$$
 \end{enumerate}
 \end{lemma}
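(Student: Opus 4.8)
The plan is to reduce everything to the single algebraic fact that a linear homothety is a nonzero scalar multiple of a linear isometry, and then to track exactly how that scalar propagates through the identity relating $R$ and $\mathfrak{R}$. Throughout I will work pointwise, so only linear algebra on the fibers is involved.

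First I would record the two transformation laws. Given a homothety $\Phi:T_PM\to T_QM$ with $\Phi^*g_Q=\lambda^2 g_P$ for some $\lambda\ne 0$, write $\Phi=\lambda\phi$ where $\phi:=\lambda^{-1}\Phi$ is a linear isometry. Since $\nabla^\ell R$ is purely covariant of rank $\ell+4$, linearity in each slot gives immediately
$$\Phi^*\nabla^\ell R_Q=\lambda^{\ell+4}\,\phi^*\nabla^\ell R_Q.$$
For the operator $\nabla^\ell\mathfrak{R}\in\otimes^{\ell+2}T^*M\otimes\operatorname{End}(TM)$ the pullback conjugates the endomorphism slot, $\Phi^*A=\Phi^{-1}A\Phi$, so the factor $\lambda$ produced by the endomorphism input is cancelled by the $\lambda^{-1}$ coming from $\Phi^{-1}$; counting the remaining $\ell+2$ covariant slots then yields
$$\Phi^*\nabla^\ell\mathfrak{R}_Q=\lambda^{\ell+2}\,\phi^*\nabla^\ell\mathfrak{R}_Q.$$
The discrepancy of exactly $\lambda^2$ between these two laws is precisely the phenomenon flagged before the statement: it reflects the single metric contraction in $\nabla^\ell R(\ldots)=g(\nabla^\ell\mathfrak{R}(\ldots)x_3,x_4)$ together with $g_Q(\Phi u,\Phi v)=\lambda^2 g_P(u,v)$. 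This is the step I expect to be the main obstacle, not because it is deep but because it is where the variance genuinely matters: a careless treatment of the endomorphism slot would give the wrong power of $\lambda$ and break the equivalence.

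For (1) $\Leftrightarrow$ (2) I would exploit that $\phi$ is an isometry to lower indices. Because $\phi^*g_Q=g_P$, index-lowering commutes with the isometric pullback, so applying $g_P$ to $\phi^*\nabla^\ell\mathfrak{R}_Q$ reproduces $\phi^*\nabla^\ell R_Q$ exactly as $g_P$ converts $\nabla^\ell\mathfrak{R}_P$ into $\nabla^\ell R_P$. Hence $\phi^*\nabla^\ell\mathfrak{R}_Q=\lambda^{-\ell-2}\nabla^\ell\mathfrak{R}_P$ holds if and only if $\phi^*\nabla^\ell R_Q=\lambda^{-\ell-2}\nabla^\ell R_P$. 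Combining this with the transformation law above, condition (1), namely $\Phi^*\nabla^\ell\mathfrak{R}_Q=\nabla^\ell\mathfrak{R}_P$, rewrites as $\phi^*\nabla^\ell\mathfrak{R}_Q=\lambda^{-\ell-2}\nabla^\ell\mathfrak{R}_P$, which is equivalent to the displayed identity in (2). The correspondence is simply $\Phi=\lambda\phi$; any sign ambiguity in $\lambda$ (only $\lambda^2$ is seen by $\Phi$) is harmless, since replacing $\phi$ by $-\phi$ and $\lambda$ by $-\lambda$ preserves both conditions.

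Finally, for (2) $\Leftrightarrow$ (3) I would pass through a fixed base point. To prove (2) $\Rightarrow$ (3), fix $P_0$, choose any basis $\{e_i\}$ of $T_{P_0}M$, and set $\varepsilon_{ij}:=g_{P_0}(e_i,e_j)$ together with $c_{i_1\dots i_{\ell+4}}:=\nabla^\ell R_{P_0}(e_{i_1},\dots,e_{i_{\ell+4}})$; for each $Q$ put $\xi_i^Q:=\phi_{P_0,Q}(e_i)$ and $\lambda_Q:=\lambda_{P_0,Q}$, and read off the two required identities from $\phi_{P_0,Q}$ being an isometry and from (2). Conversely, for (3) $\Rightarrow$ (2), given $P,Q$ define $\phi_{P,Q}$ by $\phi_{P,Q}(\xi_i^P)=\xi_i^Q$; the common value $\varepsilon_{ij}$ of the inner products forces $\phi_{P,Q}$ to be an isometry, and setting $\lambda_{P,Q}:=\lambda_Q/\lambda_P$ makes both sides of the identity in (2) equal to $\lambda_Q^{-\ell-2}c_{i_1\dots i_{\ell+4}}$ on the basis $\{\xi_i^P\}$, whence they agree as tensors. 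These verifications are routine multilinear-algebra checks once the base-point normalization is in place.
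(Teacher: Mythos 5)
Your proposal is correct and follows essentially the same route as the paper: factor the homothety as $\Phi=\lambda\phi$ with $\phi$ an isometry, count the powers of $\lambda$ coming from the $\ell+4$ covariant slots of $\nabla^\ell R$ versus the $\ell+2$ effective slots of $\nabla^\ell\mathfrak{R}$ (the $\lambda^2$ discrepancy being exactly the variance issue), and then normalize at a base point for the equivalence with (3). Your explicit verification of $(3)\Rightarrow(2)$ via $\lambda_{P,Q}=\lambda_Q/\lambda_P$ is a detail the paper omits as ``similar,'' and it checks out.
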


\begin{remark}\rm
This agrees with Proposition~0.1 of Kowalski and Van\v{z}urov\'{a} \cite{KV12}.
If we can take $\lambda_{P,Q}=1$ for all $P$ and $Q$, then
$\mathcal{M}$ is $k$-curvature homogeneous.
But we shall see in Theorem~\ref{T1.14},
 there are examples which are homothety $2$-curvature homogeneous which
 are not $2$-curvature homogeneous and thus $\lambda$ varies with the point.
\end{remark}

Motivated by Lemma~\ref{lem-1.2}, we make the following:
\begin{definition}\rm
A $k$-curvature model is a tuple
$\mathfrak{M}_k:=(V,\langle\cdot,\cdot\rangle, A^0,\dots,A^k)$
where $\langle\cdot,\cdot\rangle$
is a non-degenerate inner product on an $m$-dimensional real
vector space $V$ and where
$A^i\in\otimes^{4+i}(V^*)$ satisfies the algebraic identities of $\nabla^i R$. 
We say that two
$k$-curvature models $\mathfrak{M}_k^1$ and $\mathfrak{M}_k^2$ are
{\it homothety isomorphic} if there
is a linear isometry $\phi$ from $(V^1,\langle\cdot,\cdot\rangle^1)$ to
 $(V^2,\langle\cdot,\cdot\rangle^2)$
and if there exists $\lambda\in\mathbb{R}$ so that
$$
\phi^*A^{\ell,2}=\lambda^{-\ell-2}A^{\ell,1}\text{ for all }0\le\ell\le k\,.
$$
\end{definition}

Lemma~\ref{lem-1.2} shows
a pseudo-Riemannian manifold $\mathcal{M}$ is
homothety $k$-curvature homogeneous if and only if
there exists a $k$-curvature model $\mathfrak{M}_k$ so that
$\mathfrak{M}_k$  is homothety isomorphic to
$(T_PM,g_P,R_P,\dots,\nabla^kR_P)$ for all $P$ in $M$.

\subsection{Structure Groups} Let $\mathcal{D}(M)$ denote the group of diffeomorphisms of a pseudo-Riemannian manifold $\mathcal{M}$. We
define the group of isometries $\mathcal{I}(\mathcal{M})$ and the group of homotheties $\mathcal{H}(\mathcal{M})$
by setting:
\begin{eqnarray*}
&&\mathcal{I}(\mathcal{M}):=\{T\in\mathcal{D}(M):T^*g=g\},\\
&&\mathcal{H}(\mathcal{M}):=\{T\in\mathcal{D}(M):
\exists\lambda(T)>0:T^*g=\lambda^2g\}\,.
\end{eqnarray*}
We say that $\mathcal{M}$ is {\it homogeneous} if
$\mathcal{I}(\mathcal{M})$ acts transitively on $M$. Similarly,
$\mathcal{M}$ is said to be {\it homothety homogeneous} if
$\mathcal{H}(\mathcal{M})$ acts transitively
on $M$. There are similar local notions where the transformation $T$
is not assumed globally defined.

A complete Riemannian manifold which admits a non-Killing homothety vector field must be flat 
\cite{Tashiro}, and hence it follows that a non-flat complete homothety homogeneous manifold is necessarily 
homogeneous in the Riemannian setting; we give another proof of this result based on the slices
of Theorem~\ref{T1.6}. The situation is not so rigid in the Lorentzian case where 
pp-wave metrics support non-Killing
homothety vector fields (see for example \cite{Alek, Ku-Ra, Steller} and references therein).

\subsection{Stability}
Assertion~(1) in
the following result was established by Singer \cite{S60} in the
Riemannian context and by Podesta and Spiro \cite{PS96}
in the pseudo-Riemannian setting. In Section~\ref{sect-3}, we
will use results of \cite{PS96} to establish Assertion~(2) which
extends these results to the homothety setting. Recall that the linear orthogonal
group $\mathcal{O}$ and the linear homothety group $\mathcal{HO}$
in dimension $m$ but arbitrary signature satisfy:
$$
\dim\{\mathcal{O}\}:=\textstyle\frac12m(m-1)\text{ and }
\dim\{\mathcal{ HO}\}=\textstyle\frac12m(m-1)+1\,.$$

\begin{theorem}\label{thm-1.5}
Let $\mathcal{M}=(M,g)$ be a pseudo-Riemannian manifold.
\begin{enumerate}
\item The following Assertions are equivalent:
\begin{enumerate}
\item $\mathcal{M}$ is locally homogeneous.
\item $\mathcal{M}$ is $k$-curvature homogeneous for all $k$.
\item $\mathcal{M}$ is $k$-curvature homogeneous for $k=\frac12m(m-1)$.
\end{enumerate}
\item The following Assertions are equivalent:
\begin{enumerate}
\item $\mathcal{M}$ is locally homothety homogeneous.
\item $\mathcal{M}$ is homothety $k$-curvature homogeneous for all $k$.
\item $\mathcal{M}$ is homothety $k$-curvature homogeneous for
$k=\frac12m(m-1)+1$.
\end{enumerate}
\end{enumerate}
\end{theorem}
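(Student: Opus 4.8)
The plan is to treat the two assertions in parallel, obtaining Assertion~(2) as the homothety analogue of Assertion~(1). Assertion~(1) is the Singer--Podesta--Spiro stability theorem, so I would cite \cite{S60,PS96} for it and record only that the implications (a)$\Rightarrow$(b)$\Rightarrow$(c) are the easy ones: a locally (homothety) homogeneous manifold is (homothety) $k$-curvature homogeneous for every $k$ because the differential of a local isometry (resp. homothety) carrying $P$ to $Q$ is a linear isometry (resp. linear homothety) intertwining the $\nabla^\ell R$ (resp., via Lemma~\ref{lem-1.2}, the $\nabla^\ell\mathfrak{R}$), while (b)$\Rightarrow$(c) is a trivial weakening. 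Thus the whole content is the implication (c)$\Rightarrow$(a) of Assertion~(2), which is what I would concentrate on.

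For (c)$\Rightarrow$(a) I would run a Singer-type isotropy-stabilization argument, but with the orthogonal group replaced throughout by the homothety group. Fix $P$. Because a homothety preserves the Levi--Civita connection, a manifold homothety acts trivially on each curvature operator $\nabla^\ell\mathfrak{R}$; infinitesimally this places the relevant isotropy inside the homothety Lie algebra $\mathfrak{ho}=\mathfrak{o}\oplus\mathbb{R}\,\mathrm{Id}$, where the grading element $\mathrm{Id}$ acts on the $(1,\ell+3)$--tensor $\nabla^\ell\mathfrak{R}_P$ with weight $\ell+2$. I would therefore define the homothety isotropy algebras
$$
\mathfrak{h}_k(P):=\{A+t\,\mathrm{Id}\in\mathfrak{ho}:
A\cdot\nabla^\ell\mathfrak{R}_P=-t(\ell+2)\,\nabla^\ell\mathfrak{R}_P\ \text{ for }0\le\ell\le k\}\,,
$$
check that each is a Lie subalgebra of $\mathfrak{ho}$ (the weight terms cancel in the bracket because $\mathrm{Id}$ is central), and note that $\mathfrak{h}_0(P)\supseteq\mathfrak{h}_1(P)\supseteq\cdots$ is a descending chain. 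Since $\dim\mathfrak{ho}=\frac12m(m-1)+1$, the chain must become constant after at most $\frac12m(m-1)+1$ steps; the hypothesis that $\mathcal{M}$ is homothety $k$-curvature homogeneous for $k=\frac12m(m-1)+1$ forces $\dim\mathfrak{h}_k(P)$ to be independent of $P$, so the stabilization occurs uniformly. The extra $+1$ in the exponent, as against Assertion~(1), is precisely the one additional dimension of $\mathcal{HO}$ over $\mathcal{O}$.

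Once the chain has stabilized I would invoke the reconstruction half of the stability theorem. The decisive point is that stabilization propagates: if $\mathfrak{h}_k(P)=\mathfrak{h}_{k+1}(P)$ then $\mathfrak{h}_{k+1}(P)=\mathfrak{h}_{k+2}(P)$, by the same covariant-derivative bookkeeping as in the classical case, so that $\mathcal{M}$ is in fact homothety $k$-curvature homogeneous for all $k$ and carries a well-defined infinitesimal homothety model. I would then apply the Podesta--Spiro construction \cite{PS96} to the structure group $\mathcal{HO}$ in place of $\mathcal{O}$: their argument is insensitive both to the signature and to the non-compactness of the structure group, and so produces, for each pair $P,Q$, a local homothety carrying $P$ to $Q$, which is exactly local homothety homogeneity.

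The step I expect to be the main obstacle is the bookkeeping of the scaling weight. One must verify that the weight term $-t(\ell+2)\nabla^\ell\mathfrak{R}_P$ is genuinely compatible with the algebra structure, that the propagation of stabilization survives the presence of this weight, and above all that the Podesta--Spiro reconstruction, written for the orthogonal reduction, continues to integrate the infinitesimal model once the grading direction $\mathbb{R}\,\mathrm{Id}$ is adjoined. Confirming that adjoining this one-dimensional center alters the Singer bound by exactly $+1$ and changes nothing else is the crux of the whole argument.
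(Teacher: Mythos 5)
Your proposal follows essentially the same route as the paper: the paper likewise reduces everything to a Singer-type stabilization argument for the descending chain of isotropy subalgebras $\mathfrak{ho}^0(T_PM)\supset\mathfrak{ho}^1(T_PM)\supset\cdots$ inside the homothety algebra $\mathfrak{ho}$ of dimension $\frac12m(m-1)+1$ (whence the extra $+1$ in the bound), and then invokes Podesta--Spiro \cite{PS96} for the reconstruction step. The only cosmetic difference is that you make the scaling-weight bookkeeping for the grading element explicit, whereas the paper absorbs it by working directly with the $\mathcal{HO}$-structure and its canonical connection.
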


\subsection{Homothety homogeneous manifolds}

Suppose that $\mathcal{M}=(M,g)$ is a connected homothety homogeneous
pseudo-Riemannian manifold. 
Let $\mathcal{H}$  be a connected Lie subgroup of the group of 
homotheties of $\mathcal{M}$ which acts transitively on $\mathcal{M}$; 
note that $\mathcal{H}$ is not necessarily the full group of homotheties. If $\Phi\in\mathcal{H}$, let 
$\lambda(\Phi)>0$ be the homothety constant so that $\Phi^*(g)={\lambda(\Phi)^2}g$. Then $\lambda$
is a {\it smooth multiplicative character} on $\mathcal{H}$ since
$\lambda(\Phi_1\Phi_2)=\lambda(\Phi_1)\lambda(\Phi_2)$ for $\Phi_i\in\mathcal{H}$. 
Let $\mathcal{I}=\ker(\lambda)$
be the subgroup consisting of all the isometries which are contained in $\mathcal{H}$. 
We suppose $\mathcal{I}\ne\mathcal{H}$ or, equivalently, that $\lambda$ is non-trivial.
Let  $\mathcal{R}$ be
a Weyl scalar invariant of order $\ell$  
(i.e., it involves a total of $\ell$ derivatives of the metric tensor); 
such invariants are constructed from
the covariant derivatives of the curvature tensor by contracting indices in pairs where
we adopt the {\it Einstein convention} and sum over repeated indices. For example,
the scalar curvature $\tau$, the norm of the curvature tensor $|R|^2$, the norm of the Ricci
tensor $|\rho|^2$, and the Laplacian of the scalar curvature $\Delta\tau$ are Weyl scalar
invariants of orders $2$, $4$, $4$, and $4$, respectively, that are defined by:
$$\begin{array}{ll}
\tau:=g^{il}g^{jk}R_{ijkl},&|R|^2:=g^{ia}g^{jb}g^{kc}g^{ld}R_{ijkl}R_{abcd},\\
|\rho|^2:=g^{ia}g^{jb}g^{kc}g^{ld}R_{ijbl}R_{akcd},&\Delta\tau:=-g^{ia}g^{jb}g^{kc}R_{ijba;kc}\,.
\end{array}$$
If $\mathcal{R}$ is a Weyl scalar invariant of order $\ell$,
then the computations used to establish Lemma~\ref{lem-1.2} show that:
\begin{equation}\label{E1.a}
\Phi^*(\mathcal{R})=\lambda(\Phi)^{-\ell}\mathcal{R}\quad\text{ for any }\quad\Phi\in\mathcal{H}\,.
\end{equation}
The manifold $\mathcal{M}$
is said to be a VSI (vanishing scalar invariants) manifold if
all the Weyl scalar invariants vanish identically;
we refer to \cite{AMCH12,CMPP04, CHP08,CHP10} for further
details. In the Riemannian setting, $\mathcal{M}$ is VSI if and only if $\mathcal{M}$ is flat. There
are non-flat VSI manifolds in the higher signature setting which are homothety homogeneous
as we shall discuss in Theorem~\ref{T1.14}. If $\mathcal{M}$ is not VSI, then 
$\mathcal{M}$ is not homogeneous since some Weyl scalar invariant is non-constant. 

Let
 $\mathcal{R}$ be a Weyl scalar invariant
of order $\ell$ which does not vanish identically at some point; by Equation~(\ref{E1.a}),
$\mathcal{R}$ never vanishes on $M$ since $\mathcal{H}$ acts transitively.
In the Riemannian setting, we
may take $\mathcal{R}=|R|^2$.
If $P_0$ is the base point of $M$, set:
\begin{equation}\label{E1.b}
\mu_{\mathcal{R}}(P):=\left|\frac{\mathcal{R}(P_0)}{\mathcal{R}(P)}\right|^{1/\ell}\,.
\end{equation}
We will establish the following Theorem  in Section~\ref{S4}
which shows that $\mathcal{M}$ has cohomogeneity $1$:

\begin{theorem}\label{T1.6}
Let $\mathcal{M}=(M,g)$ be a connected pseudo-Riemannian manifold which is homothety
homogeneous with non-trivial homothety character $\lambda$ which is not VSI. 
Let $\mathcal{R}$ be a scalar Weyl invariant which does not vanish on $M$. Then
\begin{enumerate}
\item The level sets $M_c^\mathcal{R}=\{P\in M:\mu_{\mathcal{R}}(P)=c\}$ are smooth
submanifolds of $M$ which have codimension $1$, which are independent of the particular
$\mathcal{R}$ chosen, and on which $\mathcal{I}$ acts transitively.
\item If $\mathcal{M}$ is Riemannian, then there exists a constant $\kappa=\kappa(M)>0$ so that
$\operatorname{dist}(M_c^\mathcal{R},M_d^\mathcal{R})=\kappa|c-d|$. Furthermore, $\mathcal{M}$ is incomplete.
\end{enumerate}
\end{theorem}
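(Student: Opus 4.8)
The plan is to reduce everything to the single function $\mu=\mu_{\mathcal R}$ and to the way it intertwines the homothety action with rescaling. Since $\mathcal R$ is a scalar, $\Phi^*\mathcal R=\mathcal R\circ\Phi$, so Equation~(\ref{E1.a}) reads $\mathcal R(\Phi P)=\lambda(\Phi)^{-\ell}\mathcal R(P)$, and inserting this into (\ref{E1.b}) yields the transformation law
\[
\mu_{\mathcal R}(\Phi P)=\lambda(\Phi)\,\mu_{\mathcal R}(P),\qquad \Phi\in\mathcal H .
\]
Two consequences follow at once. If $\Phi$ fixes a point then $\lambda(\Phi)=1$, so the $\mathcal H$-stabilizer of any point lies in $\mathcal I$; choosing for each $P$ some $\Phi$ with $\Phi(P_0)=P$ then forces $\mu_{\mathcal R}(P)=\lambda(\Phi)$, which does not involve $\mathcal R$. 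Hence $\mu:=\mu_{\mathcal R}$ is \emph{independent of} $\mathcal R$, equals $1$ at $P_0$, and has image $\lambda(\mathcal H)$, a non-trivial connected subgroup of $(0,\infty)$ and therefore all of $(0,\infty)$. The law also shows $\mu$ is constant on $\mathcal I$-orbits, while $\mu(P)=\mu(Q)$ together with transitivity of $\mathcal H$ produces $\Phi$ with $\Phi P=Q$ and $\lambda(\Phi)=1$, i.e. $\Phi\in\mathcal I$; thus the level sets of $\mu$ are \emph{exactly} the $\mathcal I$-orbits, on which $\mathcal I$ acts transitively. Finally, taking a one-parameter subgroup $\Phi_t\subset\mathcal H$ with $\lambda(\Phi_t)=e^{at}$, $a\neq0$, and generator $X$, differentiation of $\mu(\Phi_tP)=e^{at}\mu(P)$ gives $d\mu(X)=a\mu\neq0$, so $\mu$ is a submersion and its level sets are smooth of codimension one. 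This proves~(1).

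For~(2) the central point is that $F:=|\nabla\mu|^2_g=g^{-1}(d\mu,d\mu)$ is \emph{homothety invariant}: by naturality $F\circ\Phi=(\Phi^*g)^{-1}(d(\mu\circ\Phi),d(\mu\circ\Phi))=(\lambda^2g)^{-1}(\lambda\,d\mu,\lambda\,d\mu)=F$. Transitivity of $\mathcal H$ makes $F$ constant, and in the Riemannian case $F=b^2$ with $b>0$ because $d\mu\neq0$. Put $u:=\mu/b$, so $|\nabla u|\equiv1$ and $M^{\mathcal R}_c=\{u=c/b\}$. The lower bound is routine: for any path $\gamma$ joining $\{u=p\}$ to $\{u=q\}$, Cauchy--Schwarz gives $\operatorname{length}(\gamma)\ge\bigl|\int du(\gamma')\bigr|=|p-q|$. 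For the upper bound I would \emph{not} try to connect the two slices by a single radial geodesic---that would demand completeness as $u\to\infty$, which is not a priori available---but instead build a path from short rescaled segments. Writing $\psi_s$ for the flow of $\nabla u$, the equivariance $\Phi_t\circ\psi_s=\psi_{e^{at}s}\circ\Phi_t$ forces the maximal upward existence time to scale linearly with the level, $S^+=S^+(1)\cdot u$ with $S^+(1)>0$; hence from any point at level $v$ a radial segment of length $v\delta'$ (with $\delta':=S^+(1)/2$ fixed) reaches level $v(1+\delta')$, its length equalling the $u$-increment since $|\nabla u|=1$. Chaining such segments through $p,\,p(1+\delta'),\,p(1+\delta')^2,\dots$ and closing with one partial segment gives a path to level $q$ whose length telescopes to exactly $|p-q|$. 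Therefore $\operatorname{dist}(M^{\mathcal R}_c,M^{\mathcal R}_d)=|c-d|/b$, i.e. $\kappa=1/b$. Incompleteness is then immediate: the downward radial geodesic from any point has $u(\gamma(s))=u_0-s$, which would go negative if $\gamma$ existed for all $s$, contradicting $u>0$ on $M$; so $\gamma$ is inextensible in finite time. (Alternatively, a complete non-flat Riemannian manifold admits no non-Killing homothety field by \cite{Tashiro}, and flatness is excluded by the non-VSI hypothesis.)

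I expect the genuine obstacle to lie in the upper bound of~(2). The natural instinct is to minimize along a radial geodesic, but completeness in the \emph{expanding} direction is precisely what cannot be taken for granted, so the homothety self-similarity must be used to trade one long geodesic for a rescaled chain of arbitrarily short ones. The infinitesimal computations ($d\mu(X)=a\mu$ and the invariance of $F$) are brief; the real care is in the equivariance $\Phi_t\circ\psi_s=\psi_{e^{at}s}\circ\Phi_t$ and the resulting linear scaling of the existence time, which is what legitimizes the telescoping and pins down $\kappa=1/|\nabla\mu|$.
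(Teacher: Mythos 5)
Your argument for part (1) is essentially the paper's: both rest on the transformation law $\mu_{\mathcal R}(\Phi P)=\lambda(\Phi)\mu_{\mathcal R}(P)$, though you obtain $d\mu\ne0$ by differentiating along a one-parameter subgroup with $d\lambda\ne0$, whereas the paper notes that $d\mu_{\mathcal R}$ either vanishes identically (forcing $\lambda$ trivial) or nowhere. For part (2) your route is genuinely different and is correct. The paper works synthetically: Lemma~\ref{L4.1} shows, by a cut-the-corner argument plus transitivity of $\mathcal I$ on slices, that nearby slices have unique closest points joined by geodesics perpendicular to both; it then derives the functional equation $d(M_1,M_{s_1s_2})=d(M_1,M_{s_1})+s_1\,d(M_1,M_{s_2})$, whose solution is $d(M_1,M_s)=\kappa(s-1)$, and gets incompleteness from a perpendicular geodesic headed toward decreasing $\mu$ whose length is capped by $\kappa$. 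You instead observe that $|\nabla\mu|^2$ is a homothety-invariant scalar, hence a positive constant $b^2$ in the Riemannian case, so $u=\mu/b$ satisfies $|\nabla u|\equiv1$; the lower bound is Cauchy--Schwarz and the upper bound comes from the gradient flow, whose forward existence up to any prescribed level follows from the equivariance $\Phi_t\circ\psi_s=\psi_{e^{at}s}\circ\Phi_t$. This buys the explicit value $\kappa=1/|\nabla\mu_{\mathcal R}|$ and sidesteps the existence-of-closest-points and additivity-of-distances issues that the paper treats somewhat informally; the paper's lemma in exchange yields extra geometric information (perpendicularity of minimizers to every intermediate slice). Two small points you should tighten: your scaling relation $S^+=S^+(1)\cdot u$, combined with the semigroup identities $S^+(\psi_sP)=S^+(P)-s$ and $u(\psi_sP)=u(P)+s$, actually forces $S^+\equiv+\infty$, so the forward gradient flow is globally defined and the telescoping chain, while valid, is not needed; and in the incompleteness step you should state explicitly that $|\nabla u|\equiv1$ gives $\nabla_{\nabla u}\nabla u=0$, so the integral curves of $-\nabla u$ are unit-speed geodesics and a geodesic with that initial velocity stays a gradient trajectory, which is what lets you conclude it cannot be extended past time $u_0$.
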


In order to describe the local structure of homothety homogeneous manifolds
which are not VSI, we make the following:

\begin{definition}\label{D1.7}
\rm A quadruple $\mathcal{P}=(M,g_0,\mathcal{H},\lambda)$ is said to be an 
{\it $\mathfrak{P}$-structure} if:
\begin{enumerate}
\item $\mathcal{H}$ is a connected Lie group acting smoothly, transitively, and effectively
by isometries on the connected pseudo-Riemannian manifold $\mathcal{M}_0=(M,g_0)$.
\item $\lambda$ is a non-trivial, smooth $\mathbb{R}^+$ valued multiplicative character 
of $\mathcal{H}$.
\item Let $P_0$ be the base point of $M$ and let $\mathcal{H}_0:
=\{\Phi\in\mathcal{H}:\Phi P_0=P_0\}$ be the isotropy subgroup of the action. 
Then $\mathcal{H}_0\subset\ker(\lambda)$.
\end{enumerate}\end{definition}

If $\mathcal{P}=(M,g_0,\mathcal{H},\lambda)$ is a $\mathfrak{P}$ structure, let 
$M_t:=\lambda^{-1}\{t\}/\mathcal{H}_0$.
Since  $\ker(\lambda)$ acts transitively by isometries on each $M_t$,
$\mathcal{M}_t:=(M_t,{ g_{0}}|_{M_t})$ is a homogeneous submanifold of $M$
which has codimension $1$. 
If $P\in M_t$, let $g_\lambda(P)={ t^2}g_0(P)$ define a metric which is conformally equivalent
to $g_0$. Let $\mathcal{M}_{\mathfrak{P}}:=(M,g_\lambda)$ be the associated pseudo-Riemannian
manifold. We will establish the following result in Section~\ref{S5} classifying
homothety homogeneous manifolds with non-trivial homothety character which are not VSI; the
classification is still open in the VSI context:

\begin{theorem}\label{T1.8} 
\ \begin{enumerate}
\item If $\mathcal{P}$ is a $\mathfrak{P}$ structure, then $\mathcal{M}_{\mathfrak{P}}$ is
a homothety homogeneous manifold with homothety character $\lambda$.
\item If $\mathcal{M}=(M,g)$ is a homothety homogeneous manifold with non-trivial
homothety character which is not VSI, then $M=\mathcal{M}_{\mathcal{P}}$ for some $\mathfrak{P}$ structure.
\end{enumerate}\end{theorem}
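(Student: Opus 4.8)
The plan is to prove the two assertions separately, exploiting the observation that in both directions the passage between $g_0$ and $g_\lambda$ is a conformal rescaling by the square of an $\mathcal{H}$-equivariant positive function.

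For Assertion~(1), I would first record that the metric $g_\lambda$ of $\mathcal{M}_{\mathfrak{P}}$ can be written globally as $g_\lambda=\hat\lambda^2g_0$, where $\hat\lambda:M\to\mathbb{R}^+$ is the function $\hat\lambda(\Phi P_0):=\lambda(\Phi)$; this is well defined since $\mathcal{H}_0\subset\ker(\lambda)$ by Definition~\ref{D1.7}~(3), and it is smooth because $\lambda$ is. The identity that drives everything is the cocycle relation $\hat\lambda\circ\Phi=\lambda(\Phi)\,\hat\lambda$, which follows immediately from multiplicativity: evaluating at $P=\Psi P_0$ gives $\hat\lambda(\Phi\Psi P_0)=\lambda(\Phi\Psi)=\lambda(\Phi)\lambda(\Psi)=\lambda(\Phi)\hat\lambda(P)$. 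Since $\mathcal{H}$ acts by isometries of $g_0$, for any $\Phi\in\mathcal{H}$ I would then compute
$$
\Phi^*g_\lambda=\Phi^*(\hat\lambda^2g_0)=(\hat\lambda\circ\Phi)^2\,\Phi^*g_0
=\lambda(\Phi)^2\hat\lambda^2g_0=\lambda(\Phi)^2g_\lambda\,.
$$
Thus every $\Phi\in\mathcal{H}$ is a homothety of $\mathcal{M}_{\mathfrak{P}}$ with homothety constant $\lambda(\Phi)$; as $\mathcal{H}$ acts transitively, $\mathcal{M}_{\mathfrak{P}}$ is homothety homogeneous with homothety character $\lambda$.

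For Assertion~(2), the idea is to run this computation in reverse, using a Weyl scalar invariant to manufacture the conformal factor. Since $\mathcal{M}$ is not VSI, fix a Weyl scalar invariant $\mathcal{R}$ of order $\ell$ which does not vanish identically; by Equation~(\ref{E1.a}) together with transitivity of $\mathcal{H}$ (equivalently, by Theorem~\ref{T1.6}), $\mathcal{R}$ is nowhere zero, so the function $\mu_{\mathcal{R}}$ of Equation~(\ref{E1.b}) is smooth and positive. The rule~(\ref{E1.a}), namely $\mathcal{R}(\Phi P)=\lambda(\Phi)^{-\ell}\mathcal{R}(P)$, yields the same cocycle relation $\mu_{\mathcal{R}}\circ\Phi=\lambda(\Phi)\,\mu_{\mathcal{R}}$, and $\mu_{\mathcal{R}}(P_0)=1$. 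I would then set $g_0:=\mu_{\mathcal{R}}^{-2}\,g$ and check that $\mathcal{H}$ acts by isometries of $g_0$:
$$
\Phi^*g_0=(\mu_{\mathcal{R}}\circ\Phi)^{-2}\,\Phi^*g
=\lambda(\Phi)^{-2}\mu_{\mathcal{R}}^{-2}\cdot\lambda(\Phi)^2 g=\mu_{\mathcal{R}}^{-2}g=g_0\,.
$$

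It then remains to verify that $\mathcal{P}=(M,g_0,\mathcal{H},\lambda)$ is a genuine $\mathfrak{P}$-structure and that the reconstruction returns $\mathcal{M}$. Transitivity and smoothness of the action are inherited from the homothety homogeneity of $\mathcal{M}$; to arrange effectiveness I would, if necessary, replace $\mathcal{H}$ by its quotient by the normal subgroup acting trivially, which is contained in $\ker(\lambda)$ so that $\lambda$ descends. Condition~(3) of Definition~\ref{D1.7} follows from the cocycle relation, since $\Phi P_0=P_0$ forces $\mu_{\mathcal{R}}(P_0)=\lambda(\Phi)\mu_{\mathcal{R}}(P_0)$ with $\mu_{\mathcal{R}}(P_0)=1\neq0$, whence $\lambda(\Phi)=1$, i.e. $\mathcal{H}_0\subset\ker(\lambda)$. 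Finally, the function $\hat\lambda$ attached to this $\mathfrak{P}$-structure satisfies $\hat\lambda(\Phi P_0)=\lambda(\Phi)=\mu_{\mathcal{R}}(\Phi P_0)$, so $\hat\lambda=\mu_{\mathcal{R}}$ on all of $M=\mathcal{H}P_0$; hence $g_\lambda=\hat\lambda^2g_0=\mu_{\mathcal{R}}^2\mu_{\mathcal{R}}^{-2}g=g$, giving $\mathcal{M}_{\mathfrak{P}}=\mathcal{M}$. The only genuinely substantive input is the non-vanishing of $\mathcal{R}$ on all of $M$, which is precisely where the non-VSI hypothesis and Theorem~\ref{T1.6} enter; everything else is bookkeeping of the conformal factor and its cocycle identity, and I expect the main care to lie in the effectiveness normalization and in confirming that the reconstructed conformal factor coincides with $\mu_{\mathcal{R}}$.
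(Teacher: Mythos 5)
Your argument is correct and follows essentially the same route as the paper: both directions rest on writing $g_\lambda=\hat\lambda^2 g_0$ (respectively $g_0=\mu_{\mathcal{R}}^{-2}g$) and exploiting the equivariance $\hat\lambda\circ\Phi=\lambda(\Phi)\hat\lambda$, with the non-VSI hypothesis entering only to guarantee a nowhere-vanishing Weyl invariant and hence a globally defined conformal factor. The only cosmetic difference is that the paper verifies the homothety identity in part (1) by decomposing $\Phi=\Phi_t\Psi$ through a one-parameter subgroup with $\lambda(\Phi_t)=t$ and checking the two factors separately, whereas you run the single cocycle computation directly; your write-up also makes explicit several verifications (condition (3) of Definition~\ref{D1.7}, the identity $g_\lambda=g$) that the paper leaves implicit.
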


Let $\mathcal{M}=(M,g)$ be a connected homothety homogeneous
pseudo-Riemannian manifold and let $\mathcal{H}$ be a Lie subgroup 
of homotheties acting transitively on $\mathcal{M}$. We suppose that $\mathcal{M}$ is not
VSI and that the homothety multiplicative character $\lambda$ is non-trivial. 
Let $\mathcal{L}$ be the Lie derivative. Recall that 
a vector field $X$ is said to be a {\it Killing} vector field if $\mathcal{L}_Xg=0$ or, equivalently, if
$X_{a;b}+X_{b;a}=0$ where `;' represents covariant differentiation with respect
to the Levi-Civita connection. Similarly $X$ is said to be a
{\it homothety} vector field if $\mathcal{L}_Xg=\lambda g$ or, equivalently,
$X_{a;b}+X_{b;a}=2\lambda g_{ab}$. Let $\mathfrak{H}$ be the Lie algebra of $\mathcal{H}$
and let $\mathfrak{I}$ the Lie sub algebra of $\mathcal{I}$; the elements of $\mathfrak{H}$ are 
homothety vector fields and the elements of $\mathfrak{I}$ are Killing vector fields.
The short exact sequence
\begin{equation}\label{E1.c}
1\rightarrow\ker(\lambda)\rightarrow\mathcal{H}\mapright{\lambda}\mathbb{R}^+\rightarrow1
\end{equation}
 defined by the homothety character is crucial.
This sequence is said to {\it split} if $\mathcal{H}$ is isomorphic as a Lie group to 
$\mathbb{R}^+\oplus\ker(\lambda)$ and if under this isomorphism, $\lambda$ is projection
on the first factor. The sequence is split if and only if there exists $0\ne\xi\in\mathfrak{H}$ so that
$[\xi,\eta]=0$ for all $\eta\in\mathfrak{H}$; such a $\xi$ generates a $1$-parameter subgroup $\Phi_t$
such that $\lambda(\Phi_t)=t$ for $t\in\mathbb{R}^+$ that is central in $\mathcal{H}$ and gives
rise to the splitting. In the Riemannian setting, if $\mathcal{I}$ is compact (or, equivalently, a slice
$M_c$ is compact), then we can average over the group to construct such an element of 
$\mathfrak{H}$ and show the sequence is necessarily split in this setting.

In view of Theorem~\ref{T1.8}, it is perhaps worth to exhibit a Lie group with
many multiplicative $\mathbb{R}^+$ valued characters; there, are of course,
many possible such examples:
\begin{example}\rm
Let $\mathcal{H}_n$ be the group of upper triangular $n\times n$ real
matrices with positive entries on the
diagonal. A typical element $H\in\mathcal{H}$ takes the form:
$$
H:=\left(\begin{array}{llllll}
h_{11}&h_{12}&h_{13}&\dots&h_{1n}\\
0&h_{22}&h_{23}&\dots&h_{2n}\\
0&0&h_{33}&\dots&h_{3n}\\
\dots&\dots&\dots&\dots&\dots\\
0&0&0&\dots&h_{nn}
\end{array}\right)
$$
where $h_{11}>0$, $\dots$, $h_{nn}>0$, and where the $h_{ij}$ are arbitrary real 
numbers for $i<j$. If
 $0\ne\vec a=(a_1,\dots,a_n)\in\mathbb{R}^n$,
 then we may define a multiplicative $\mathbb{R}^+$
valued character $\lambda_{\vec a}$ on $\mathcal{H}$ by setting
$$\lambda_{\vec a}(H)=h_{11}^{a_1}\dots h_{nn}^{a_n}\,.$$
The associated sequence $1\rightarrow\ker(\lambda_{\vec a})\rightarrow\mathcal{H}_n\rightarrow\mathbb{R}^+\rightarrow1$
is split if and only if $a_1+\dots+a_n\ne0$. Thus there are non-split characters.
If we were to permit the $h_{ij}$ to be complex with $h_{ii}\ne0$, we could
define $\mathcal{H}_{\mathbb{C}}$ and obtain similarly
$$\lambda_{\vec a}(H)=|h_{11}|^{a_1}\dots|h_{nn}|^{a_n}\,.$$
\end{example}

\begin{definition}\label{D1.9}
\rm A quadruple $\mathcal{Q}=(N,g_N,\mathcal{I},\theta)$ is said to be a 
{\it$\mathfrak{Q}$-structure}
if $\mathcal{I}$ is a connected Lie group which acts transitively and effectively by isometries
 on a connected
pseudo-Riemannian manifold $(N,g_N)$,
and if $\theta\in C^\infty(\Lambda^1N)$ is invariant under the action of $\mathcal{I}$.
Let $t\in\mathbb{R}$, let $M=\mathbb{R}\times N$, and let
$$
\mathcal{M}_{t,\mathcal{Q}}:=(\mathbb{R}\times N,g_{t,\mathcal{Q}})\text{ where }
g_{t,\mathcal{Q}}:=e^{tx}(dx^2+dx\circ\theta+ds^2_N)\,.
$$
We say that $\mathfrak{Q}$ is a {\it Riemannian $\mathfrak{Q}$ structure}
 if $\mathcal{M}_{t,\mathcal{Q}}$ is Riemannian or, equivalently, if $(N,g_N)$ is
Riemannian and if $||\theta||_{g_N}<1$. 
\end{definition}
We will establish the following result in Section~\ref{S6}.
\begin{theorem}\label{T1.10}
\ \begin{enumerate}
\item Let $\mathcal{Q}=(N,g_N,\mathcal{I},\theta)$ be a $\mathfrak{Q}$-structure.
Then 
$g_{t,\mathcal{Q}}$ is non-degenerate if and only if $g_N(\theta,\theta)\ne1$. In this setting, 
$\mathcal{M}_{t,\mathcal{Q}}$
is homothety homogeneous with non-trivial homothety character $\lambda$ and the
short exact sequence defined by $\lambda$ splits. 
\item Let $\mathcal{M}$ be a homothety homogeneous Riemannian manifold with non-trivial
homothety character $\lambda$ which is not VSI.
If the short exact sequence defined by $\lambda$ splits and if $\xi$ is spacelike,
then $\mathcal{M}=\mathcal{M}_{t,\mathcal{Q}}$ for some $\mathfrak{Q}$-structure and
for some $t$ where $g_N(\theta,\theta)\ne1$.
\end{enumerate}
\end{theorem}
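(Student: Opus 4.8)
The plan is to prove the two assertions separately: for (1) I verify directly that the model $g_{t,\mathcal{Q}}$ has all the advertised properties, and for (2) I run the same construction in reverse, reading a $\mathfrak{Q}$-structure off the split homothety action.

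For Assertion (1), I first record the metric in the product coordinates. Writing $\theta=\theta_i\,dy^i$ on $N$ and splitting $T(\mathbb{R}\times N)=\mathbb{R}\partial_x\oplus TN$, the symmetric tensor is $g_{t,\mathcal{Q}}=e^{tx}G$ with $G=\begin{pmatrix}1&\theta_j\\ \theta_i&(g_N)_{ij}\end{pmatrix}$. The Schur complement gives $\det G=\det(g_N)\,(1-g_N(\theta,\theta))$, and since $e^{tx}>0$ this shows non-degeneracy is exactly $g_N(\theta,\theta)\neq1$. For homothety homogeneity, the translation $\Phi_s(x,p)=(x+s,p)$ satisfies $\Phi_s^*g_{t,\mathcal{Q}}=e^{ts}g_{t,\mathcal{Q}}$, so it is a homothety with $\lambda(\Phi_s)=e^{ts/2}$, which is non-trivial for $t\neq0$; meanwhile each $I\in\mathcal{I}$ acting by $(x,p)\mapsto(x,Ip)$ preserves $dx^2$, $ds^2_N$, and—because $\theta$ is $\mathcal{I}$-invariant—also $dx\circ\theta$, hence acts isometrically. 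The group generated by $\{\Phi_s\}$ and $\mathcal{I}$ is transitive on $\mathbb{R}\times N$, and $\partial_x$ is central (it commutes with the lifts of the Killing fields of $N$), so it furnishes the distinguished $\xi$ that splits the sequence (\ref{E1.c}).

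For Assertion (2) I reverse this. The splitting hypothesis on (\ref{E1.c}) supplies a central non-Killing homothety field $\xi\in\mathfrak{H}$ with $\mathcal{L}_\xi g=2c\,g$ for some $c\neq0$. Since $\xi$ is central, $\mathcal{L}_\eta\xi=-[\xi,\eta]=0$ for every $\eta\in\mathfrak{I}$, so $\xi$ is invariant under the connected isometry group $\mathcal{I}=\ker(\lambda)$. By Theorem~\ref{T1.6} the level set $N:=M_{c_0}^{\mathcal{R}}$ is a codimension-one submanifold on which $\mathcal{I}$ acts transitively by isometries, and I take $(N,g_N)$ with $g_N:=g|_{N}$ as the homogeneous base. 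The identity $\mathcal{L}_\xi[g(\xi,\xi)]=2c\,g(\xi,\xi)$ together with $\mathcal{I}$-invariance and transitivity shows $g(\xi,\xi)$ is constant on $N$; since $\xi$ is spacelike this constant is positive, and I rescale $\xi$ so that $g(\xi,\xi)=1$ on $N$ (this is where the spacelike hypothesis fixes the $+dx^2$ normalization).

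I then use the flow $\Phi_x$ of $\xi$ to identify $M\cong\mathbb{R}\times N$ with $\xi=\partial_x$ and $N=\{x=0\}$; transversality of $\xi$ to the slices (it is the homothety direction, along which $\mathcal{R}$ strictly varies) makes this a diffeomorphism onto its image. In these coordinates the homothety equation integrates to $g=e^{2cx}h$ with $h$ independent of $x$, and the normalization gives $h_{xx}=1$, while $h_{xi}=g(\xi,\partial_{y^i})|_{N}$ and $h_{ij}=(g_N)_{ij}$. Setting $\theta:=\iota_\xi g|_{TN}$, which is $\mathcal{I}$-invariant by invariance of $\xi$ and of $g$, we obtain $g=e^{2cx}\big(dx^2+dx\circ\theta+ds^2_N\big)=g_{t,\mathcal{Q}}$ with $t=2c\neq0$, so $\mathcal{Q}=(N,g_N,\mathcal{I},\theta)$ is a $\mathfrak{Q}$-structure and non-degeneracy forces $g_N(\theta,\theta)\neq1$ by Assertion (1). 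The main obstacle I anticipate is the global step: promoting the flow-box identification to a genuine diffeomorphism $M\cong\mathbb{R}\times N$ (completeness of the flow of $\xi$ and the fact that $N$ meets each $\xi$-orbit exactly once), together with arranging that $\mathcal{I}$ acts effectively on $N$ as Definition~\ref{D1.9} requires (passing to the effective quotient if not). It is precisely in showing that the $x$-independent tensor $h$ is globally the pull-back of $dx^2+dx\circ\theta+g_N$ that the homothety equation and the slice structure of Theorem~\ref{T1.6} must be combined with care.
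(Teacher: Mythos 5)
Your proof is correct and follows essentially the same route as the paper: direct verification of (1) via the $x$-translations together with the lifted $\mathcal{I}$-action, and for (2) normalizing the central spacelike homothety field on the slice $M_1$ of Theorem~\ref{T1.6}, flowing it to identify $M$ with $\mathbb{R}\times N$, and reading off $\theta$ as $\iota_\xi g$ restricted to the slice. The only genuine variation is your Schur-complement computation $\det G=\det(g_N)\,(1-g_N(\theta,\theta))$, which handles the non-degeneracy criterion in one stroke where the paper argues case by case over timelike, spacelike, and null $\theta$; this is a clean simplification and reaches the same conclusion.
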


\begin{theorem}\label{T1.12xx}
If $\mathcal{Q}=(N,g_N,\mathcal{I},\theta)$ is a Riemannian $\mathfrak{Q}$-structure, then
$\mathcal{M}_{t,\mathcal{Q}}$ is 
isometric to $\mathcal{M}_{\tilde t,\tilde{\mathcal{Q}}}$ where
$\tilde{\mathcal{Q}}=(\tilde N,\tilde g_N,\tilde{\mathcal{I}},0)$ has associated $1$-form $\tilde\theta=0$
if and only if $\theta_{a;b}+\theta_{b;a}=0$ for all $(a,b)$.
\end{theorem}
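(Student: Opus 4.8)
The plan is to prove both implications by characterizing removability of the cross term through the existence of a homothety vector field orthogonal to the canonical slices of Theorem~\ref{T1.6}. Write $h:=dx^2+dx\circ\theta+ds_N^2$, so that $g_{t,\mathcal{Q}}=e^{tx}h$, and let $Z:=\theta^\sharp$ be the $g_N$-dual of $\theta$ on $N$, regarded as an $x$-independent field on $M=\mathbb{R}\times N$. Since $\theta$ and $g_N$ are $\mathcal{I}$-invariant and $\mathcal{I}$ acts transitively, $g_N(\theta,\theta)$ is a constant $|\theta|^2<1$. A short computation of the metric dual of $dx$ shows that the $g_{t,\mathcal{Q}}$-normal to each slice $\{x=c\}$ points along $\partial_x-Z$; hence the tangential part of the canonical homothety field $\xi=\partial_x$ (which satisfies $\mathcal{L}_\xi g_{t,\mathcal{Q}}=t\,g_{t,\mathcal{Q}}$) is exactly $Z$. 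The condition $\theta_{a;b}+\theta_{b;a}=0$ says precisely that $Z$ is Killing on $(N,g_N)$.

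For the implication $\theta_{a;b}+\theta_{b;a}=0\Rightarrow$ removability, I would first record that when $\theta$ is Killing the flow $\psi_s$ of $Z$ preserves both $g_N$ and $\theta$: indeed $\mathcal{L}_Zg_N=0$ by hypothesis, while $\mathcal{L}_Z\theta=d(|\theta|^2)+\iota_Zd\theta=\iota_Zd\theta$ vanishes because $\theta^a\theta_{a;b}=\tfrac12(|\theta|^2)_{;b}=0$. Consequently $Z$ is a Killing field of $g_{t,\mathcal{Q}}$, since $\mathcal{L}_Z g_{t,\mathcal{Q}}=e^{tx}\mathcal{L}_Zh=0$, and $\eta:=\partial_x-Z$ is a homothety field orthogonal to the slices. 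I would then introduce the diffeomorphism $F(\tilde x,q):=(\tilde x,\psi_{-\tilde x}(q))$, i.e. the flow along $\eta$, and compute $F^*g_{t,\mathcal{Q}}$ term by term. Using $\psi_s^*\theta=\theta$ and $\psi_s^*g_N=g_N$, the cross terms cancel and one obtains $F^*g_{t,\mathcal{Q}}=e^{t\tilde x}\big((1-|\theta|^2)\,d\tilde x^2+ds_N^2\big)$; rescaling $\hat x:=\sqrt{1-|\theta|^2}\,\tilde x$ puts this in the form $e^{\tilde t\hat x}(d\hat x^2+ds_N^2)$ with $\tilde t:=t/\sqrt{1-|\theta|^2}$, which is $\mathcal{M}_{\tilde t,\tilde{\mathcal{Q}}}$ for $\tilde{\mathcal{Q}}=(N,g_N,\mathcal{I},0)$.

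For the converse, suppose $F:\mathcal{M}_{t,\mathcal{Q}}\to\mathcal{M}_{\tilde t,\tilde{\mathcal{Q}}}$ is an isometry with $\tilde\theta=0$. In the target $\partial_{\tilde x}$ is a non-Killing homothety field orthogonal to the slices, and since $F$ preserves the canonical slicing and orthogonality by Theorem~\ref{T1.6}, the field $\eta:=(F^{-1})_*\partial_{\tilde x}$ is a homothety field on $\mathcal{M}_{t,\mathcal{Q}}$, orthogonal to the slices, with $\mathcal{L}_\eta g_{t,\mathcal{Q}}=\tilde t\,g_{t,\mathcal{Q}}$ and $\tilde t\neq0$. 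Extracting the Killing condition is the technical heart. On a fixed slice $\Sigma=\{x=c\}$ with induced metric $g^\Sigma=e^{tc}g_N$, unit normal $\nu$, and second fundamental form $S$, the tangential projection of the homothety equation for a field $X$ with $\mathcal{L}_Xg=c_Xg$ reads $\mathcal{L}^\Sigma_{X^\top}g^\Sigma=c_X\,g^\Sigma+2\langle X,\nu\rangle S$. Applying this to $\eta$, whose tangential part vanishes and whose normal component $\langle\eta,\nu\rangle$ is nonzero, forces $S=-\tfrac{\tilde t}{2\langle\eta,\nu\rangle}g^\Sigma$, so the slices are totally umbilic; applying it to $\xi=\partial_x$ with $\xi^\top=Z$ then yields $\mathcal{L}_Zg^\Sigma\in\mathbb{R}\cdot g^\Sigma$, i.e. $Z$ is a conformal Killing field of $\Sigma$.

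Finally I would upgrade conformal Killing to Killing using the constancy of $|Z|$. Since $(\mathcal{L}_Zg^\Sigma)(Z,Z)=Z(|Z|^2_{g^\Sigma})=0$, while $\mathcal{L}_Zg^\Sigma=2\rho\,g^\Sigma$ gives $(\mathcal{L}_Zg^\Sigma)(Z,Z)=2\rho|Z|^2_{g^\Sigma}$, we obtain $\rho=0$ whenever $\theta\neq0$ (the case $\theta=0$ being trivial). Thus $Z=\theta^\sharp$ is Killing on $(\Sigma,g^\Sigma)$, hence on $(N,g_N)$ since the two metrics are homothetic, which is precisely $\theta_{a;b}+\theta_{b;a}=0$. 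I expect the main obstacle to be the converse direction: justifying that the isometry respects the canonical slicing of Theorem~\ref{T1.6} and then organizing the two homothety fields $\eta$ and $\xi$ on a slice to force umbilicity, after which the constant-length upgrade closes the argument.
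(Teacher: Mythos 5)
Your proposal is correct and, in one of the two directions, genuinely different from the paper. For the implication $\theta_{a;b}+\theta_{b;a}=0\Rightarrow$ removability you do essentially what the paper does: integrate the $\mathcal{I}$-invariant Killing field $Z=\theta^\sharp$ and shear along its flow (the paper's map is $\Psi_\varrho(x,y)=(x,\phi_{\varrho x}y)$, which pulls the product metric back to the cross-term metric; yours runs the same computation in the opposite direction), followed by the rescaling $\hat x=\sqrt{1-|\theta|^2}\,x$ that restores the parameter $t$. For the converse the paper is much terser: it pulls back the central homothety field $\partial_{\tilde x}$ to get $\tilde{\mathfrak h}$, observes that $\xi:=\mathfrak h-\tilde{\mathfrak h}$ is Killing (difference of two homothety fields with the same constant), and identifies $\theta$ on a slice as the $g_N$-dual of $\xi$. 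Your route via the second fundamental form --- using the normal homothety field $\eta$ to force the slices to be totally umbilic, deducing that $Z$ is conformal Killing from the tangential projection of $\mathcal{L}_{\partial_x}g=tg$, and killing the conformal factor with $Z(|Z|^2)=0$ --- is longer but actually makes explicit a step the paper suppresses, namely why the tangential part of an ambient Killing/homothety field is Killing for the \emph{induced} metric on the slice (this is precisely where umbilicity is needed, since $\mathcal{L}^\Sigma_{X^\top}g^\Sigma=c_Xg^\Sigma+2\langle X,\nu\rangle S$). One caveat you share with the paper: the identification of the $\{x=c\}$ hypersurfaces with the canonical slices of Theorem~\ref{T1.6}, and hence the claim that the isometry $F$ carries slices to slices and normals to normals, uses that the manifold is not VSI (in the Riemannian case, not flat) and that $t,\tilde t\neq0$; neither hypothesis appears in the statement of the theorem (flat examples such as $e^{2x}(dx^2+ds^2_{S^{m-1}})$ do occur), so strictly speaking both proofs need these degenerate cases handled separately.
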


Of particular interest is the case when $\theta=0$. We shall prove the following result in
Section~\ref{S7}.

\begin{theorem}\label{T1.11}
Let $\mathcal{Q}=(N,g_N,\mathcal{I},\theta)$ be a $\mathfrak{Q}$-structure with $\theta=0$. 
If 
$t\ne0$, and if $\tau_N-\frac{(m-1)(m-2)}4t^2\ne0$, then 
$\mathcal{M}$ is not flat. Such manifolds provide examples of manifolds which are $k$-homothety 
curvature homogeneous
for all $k$ but which is not $0$-curvature homogeneous.\end{theorem}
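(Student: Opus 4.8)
The plan is to reduce all three assertions to a single computation of the scalar curvature $\tau_{\mathcal{M}}$, which turns out to witness everything. Since $\theta=0$, Definition~\ref{D1.9} gives $\mathcal{M}=(\mathbb{R}\times N,g)$ with $g=e^{tx}(dx^2+ds_N^2)=e^{2\sigma}g_0$, where $g_0:=dx^2+ds_N^2$ is the product metric on $\mathbb{R}\times N$ and $\sigma:=\tfrac{t}{2}x$. First I would record the two facts that make the conformal computation collapse. Because $g_0$ is a product with a flat $\mathbb{R}$-factor and $\sigma$ depends linearly on $x$ alone, the $g_0$-Hessian of $\sigma$ vanishes identically, so $\Delta_{g_0}\sigma=0$, while $|d\sigma|_{g_0}^2=\tfrac{t^2}{4}$. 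Moreover, since $g_0$ is the product of a flat line with $N$, one has $\tau_{g_0}=\tau_N$, and this is a genuine constant because $\mathcal{I}$ acts transitively by isometries on $N$, so every scalar curvature invariant of $N$ is constant.

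Next I would apply the standard conformal transformation law for scalar curvature in dimension $m$, namely $\tau_{e^{2\sigma}g_0}=e^{-2\sigma}\bigl(\tau_{g_0}-2(m-1)\Delta_{g_0}\sigma-(m-1)(m-2)|d\sigma|_{g_0}^2\bigr)$. Substituting the data above yields
$$\tau_{\mathcal{M}}=e^{-tx}\Bigl(\tau_N-\tfrac{(m-1)(m-2)}{4}t^2\Bigr).$$
The appearance of exactly the hypothesis quantity confirms the normalization. Under the assumption $\tau_N-\tfrac{(m-1)(m-2)}{4}t^2\ne0$ the scalar curvature is nowhere zero, so $\mathcal{M}$ cannot be flat; this proves the non-flatness claim. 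Under the additional assumption $t\ne0$, the factor $e^{-tx}$ makes $\tau_{\mathcal{M}}$ non-constant. Since $0$-curvature homogeneity would supply at each pair of points a linear isometry carrying $R$ to $R$, and linear isometries preserve all metric contractions, it would force every scalar curvature invariant — in particular $\tau$ — to be constant. A non-constant $\tau$ therefore rules out $0$-curvature homogeneity.

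Finally, for the positive statement I would invoke the structural results already established. With $\theta=0$ we have $g_N(\theta,\theta)=0\ne1$, so Theorem~\ref{T1.10}(1) shows that $\mathcal{M}_{t,\mathcal{Q}}$ is homothety homogeneous with non-trivial homothety character. Homothety homogeneity implies local homothety homogeneity, and then the equivalence in Theorem~\ref{thm-1.5}(2) gives that $\mathcal{M}$ is homothety $k$-curvature homogeneous for all $k$. The main obstacle is not conceptual but one of bookkeeping: fixing the sign and normalization of the conformal scalar-curvature law so that the output reproduces precisely the quantity $\tau_N-\tfrac{(m-1)(m-2)}{4}t^2$. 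The key simplifying observation that eliminates every Hessian and gradient term beyond $|d\sigma|^2$ is that $\sigma$ is $g_0$-affine, a consequence of the product/flat structure; once this is in place, scalar curvature alone serves as a witness and no higher covariant derivatives of the curvature need be computed.
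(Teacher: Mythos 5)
Your proposal is correct and it lands on exactly the formula the paper derives, namely $\tau_{\mathcal M}=e^{-tx}\bigl(\tau_N-\frac{(m-1)(m-2)}{4}t^2\bigr)$, but by a different route. The paper works entirely in coordinates: it writes the Christoffel symbols of $g_{t,\mathcal Q}$ in terms of those of $g_N$ (using normal coordinates on $N$ at the point in question), computes the curvature operator componentwise, and contracts to obtain $\tilde\rho=\rho-\frac{m-2}4t^2g$ and then $\tilde\tau$. You instead write $g_{t,\mathcal Q}=e^{2\sigma}g_0$ with $\sigma=\frac t2x$ and $g_0$ the product metric, observe that the $g_0$-Hessian of $\sigma$ vanishes so only the $|d\sigma|^2_{g_0}=\frac{t^2}4$ term survives, and quote the standard conformal transformation law for scalar curvature. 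Your route is shorter and isolates the one invariant actually needed; the paper's computation is heavier but also delivers the full curvature operator and the Ricci tensor as a by-product. The concluding logic is the same in both: $\tau_{\mathcal M}$ nowhere zero rules out flatness; $\tau_{\mathcal M}$ non-constant (using $t\ne0$ and the fact that $\tau_N$ is constant by homogeneity of $N$, which you correctly flag) rules out $0$-curvature homogeneity, since a pointwise linear isometry matching the curvature tensors would preserve the full contraction $\tau$ --- a step the paper leaves implicit; and the homothety $k$-curvature homogeneity for all $k$ follows from Theorem~\ref{T1.10}(1) together with Theorem~\ref{thm-1.5}(2), exactly as you invoke them. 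I see no gap.
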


\subsection{Walker Lorentzian 3 dimensional manifolds}
Section~\ref{S9} is devoted to the study of a very specific
family of examples.
Let $\mathcal{M}=(M,g_M)$ be a $3$-dimensional Lorentzian manifold
which admits a parallel null
vector field, i.e. $\mathcal{M}$ is a $3$-dimensional Walker manifold.
Such a manifold admits local
adapted coordinates $(x,y,\tilde x)$ so that the (possibly) non-zero components of the metric are given by
$$g(\partial_x,\partial_x)=-2f(x,y),\quad g(\partial_x,\partial_{\tilde x})=g(\partial_y,\partial_y)=1\,.$$
We clear the previous notation and denote this manifold by $\mathcal{M}_f$. 

 Walker manifolds are a special class of Kundt spacetimes, and thus they are Lorentzian manifolds with constant scalar curvature invariants \cite{CHP08,CHP10}. Moreover all the scalar curvature invariants of a Walker manifold vanish and we have:

\begin{theorem}\label{T1.12} The manifolds $\mathcal{M}_f$ are all VSI manifolds.
\end{theorem}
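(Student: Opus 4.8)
The plan is to prove the vanishing of every Weyl scalar invariant by a direct computation in the given adapted coordinates $(x,y,\tilde x)$, exploiting the parallel null vector field. First I would record the inverse metric, whose only non-zero entries are $g^{x\tilde x}=g^{\tilde x x}=1$, $g^{yy}=1$, and $g^{\tilde x\tilde x}=2f$; in particular $g^{xx}=g^{xy}=0$. A short computation then gives the Christoffel symbols: the only non-vanishing ones are $\Gamma_{xx}^{\,y}=f_y$, $\Gamma_{xx}^{\,\tilde x}=-f_x$, and $\Gamma_{xy}^{\,\tilde x}=\Gamma_{yx}^{\,\tilde x}=-f_y$. Two features are decisive: every Christoffel symbol has its two lower indices in $\{x,y\}$, so $\Gamma^{\,s}_{\tilde x\,a}=0$ for all $s,a$, which is just the statement that $\partial_{\tilde x}$ is parallel; and all structure functions depend only on $(x,y)$. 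Computing the curvature from these data, one finds that, up to the usual symmetries, the single non-zero fully covariant component is $R(\partial_x,\partial_y,\partial_x,\partial_y)=-f_{yy}$; in particular every non-zero component of the fully covariant curvature tensor carries all four indices in $\{x,y\}$.

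The heart of the argument is the following support statement, which I would prove by induction on $k$: for every $k$, each non-zero fully covariant component of $\nabla^kR$ has all of its indices (the $k$ differentiation indices together with the four curvature indices) lying in $\{x,y\}$, and moreover all of these component functions are independent of $\tilde x$. The base case $k=0$ is the curvature computation above. For the inductive step, write $\nabla_a T_{b_1\cdots b_r}=\partial_a T_{b_1\cdots b_r}-\sum_q\Gamma_{a b_q}^{\,s}T_{b_1\cdots s\cdots b_r}$ with $T=\nabla^{k}R$. If $a=\tilde x$ the whole expression vanishes, because $\partial_{\tilde x}$ annihilates the ($\tilde x$-independent) components and $\Gamma^{\,s}_{\tilde x\,b}=0$; thus no differentiation index is ever $\tilde x$. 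If some $b_q=\tilde x$, then $T_{\cdots\tilde x\cdots}=0$ by the inductive hypothesis, the factor $\Gamma^{\,s}_{a\tilde x}=0$ removes the dangerous correction, and every remaining correction still carries a $\tilde x$ in an untouched slot and hence vanishes; thus no curvature index is ever $\tilde x$. Independence of $\tilde x$ propagates because $\partial_{\tilde x}$ kills every factor that appears. The only genuine obstacle here is the bookkeeping needed to run the two coupled claims simultaneously, but both reduce to the single structural fact that there is no Christoffel symbol with a lower $\tilde x$ index, i.e. to the parallelism of the null field $\partial_{\tilde x}$.

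Finally I would assemble the contraction argument. By definition a Weyl scalar invariant of order $\ell$ is a sum of complete metric contractions of a product of factors $\nabla^{k_1}R,\dots,\nabla^{k_p}R$. Fix one such monomial and one assignment of coordinate values to the summed indices giving a non-zero contribution. Each factor must then be evaluated on indices all lying in $\{x,y\}$, by the support statement; but among such indices the inverse metric satisfies $g^{xx}=g^{xy}=0$ and only $g^{yy}=1\neq0$, so each contraction $g^{\mu\nu}$ forces $\mu=\nu=y$. Since the contraction is complete, every index slot of every factor is thereby forced to equal $y$. In particular each factor $\nabla^{k}R$ is evaluated with its first antisymmetric curvature pair equal to $(y,y)$, and hence vanishes by the antisymmetry inherited by all covariant derivatives of the curvature tensor. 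Therefore every monomial, and hence every Weyl scalar invariant, vanishes identically, so $\mathcal{M}_f$ is VSI. Conceptually this is exactly the boost-weight (alignment) mechanism attached to the parallel null line spanned by $\partial_{\tilde x}$, in agreement with the Kundt and constant scalar invariant remarks preceding the statement.
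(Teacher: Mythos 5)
Your proof is correct and follows essentially the same route as the paper: compute the Christoffel symbols, show inductively that every component of $\nabla^kR$ with a $\partial_{\tilde x}$ index vanishes (and that the surviving components are $\tilde x$-independent), and then use the fact that $g^{xx}=g^{xy}=0$ to kill every complete contraction. The only cosmetic difference is in the last step: the paper observes that the nonzero components must carry an $x$ index whose contraction forces a $\tilde x$ partner, while you argue that avoiding $\tilde x$ forces all contracted indices to equal $y$ and then invoke the antisymmetry of the curvature pair; these are two phrasings of the same mechanism.
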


Theorem~\ref{T1.12} shows that these form an interesting family of examples.
We have (see \cite{GNS12} Theorem 2.10 and Theorem 2.12):

\begin{theorem}\label{T1.13}
Suppose that $f_{yy}$ is never zero.
\begin{enumerate}
\item $\mathcal{M}_f$ is 1-curvature homogeneous if and only if exactly
one of the following three possibilities holds:
\begin{enumerate}
\item $f_{yy}(x,y)=ay^2$ for $a\ne0$. This manifold is symmetric.
\item $f_{yy}(x,y)=\alpha(x)e^{by}$ where $0\ne b\in\mathbb{R}$ and where $\alpha(x)$ is arbitrary.
\item $f_{yy}(x,y)=\alpha(x)$ where $\alpha(x)=c\cdot\alpha_x^{3/2}$ for some $c\ne0\in\mathbb{R}$.
\end{enumerate}
\item $\mathcal{M}$ is $2$-curvature homogeneous if and only if it falls into one of the three families, all of which are locally homogeneous:
\begin{enumerate}
\item $f=b^{-2}\alpha(x)e^{by}+\eta(x)y+\gamma(x)$ where
$0\ne b\in\mathbb{R}$, where
 $\alpha(x)\ne0$, and where
$\eta(x)=b^{-1}\alpha^{-1}(x)
\{\alpha_{xx}(x)-\alpha_x^2(x)\alpha(x)^{-1}\}$.
\item $f=a(x-x_0)^{-2}y^2+\beta(x)y+\gamma(x)$ where
$0\ne a\in\mathbb{R}$.
\item $f=ay^2+\beta(x)y+\gamma(x)$ where $0\ne a\in\mathbb{R}$.
\end{enumerate}
\end{enumerate}\end{theorem}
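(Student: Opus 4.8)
The plan is to reduce the classification to a system of ordinary differential equations for $f$ by extracting a complete set of scalar invariants of the curvature jets in a frame adapted to the parallel null field. Since Theorem~\ref{T1.12} shows that $\mathcal{M}_f$ is VSI, every polynomial Weyl scalar invariant vanishes identically, so $k$-curvature homogeneity cannot be detected by such invariants and one is forced to argue frame-theoretically. First I would record the Levi-Civita connection: the only non-vanishing Christoffel symbols are $\Gamma^y_{xx}=f_y$, $\Gamma^{\tilde x}_{xx}=-f_x$ and $\Gamma^{\tilde x}_{xy}=-f_y$. A direct computation then shows that the curvature has a single essential component, $R(\partial_x,\partial_y,\partial_y,\partial_x)=f_{yy}$, and that its covariant derivatives are governed by the $y$- and $x$-derivatives of $f_{yy}$ (one finds $f_{yyy}$ and $f_{xyy}$ at first order, and $f_{yyyy},f_{xyyy},f_{xxyy}$ at second order), the derivative along the parallel null field $\partial_{\tilde x}$ annihilating every covariant derivative of $R$.

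Next I introduce the pseudo-orthonormal frame $E_1=\partial_{\tilde x}$, $E_2=\partial_y$, $E_3=\partial_x+f\,\partial_{\tilde x}$, so that $g(E_1,E_3)=g(E_2,E_2)=1$ with all other products zero. The parallel null line $\mathbb{R}E_1$ is precisely the radical of the curvature tensor, so any linear isometry matching the curvature at two points must preserve this line; the relevant structure group therefore reduces to the $2$-dimensional parabolic generated by the boost $E_1\mapsto aE_1,\ E_3\mapsto a^{-1}E_3$ and the null rotation $E_2\mapsto E_2+bE_1,\ E_3\mapsto E_3-bE_2-\tfrac12 b^2E_1$. I would then compute the induced action on the normalized components $R_0:=f_{yy}$, $A:=f_{yyy}$, $B:=f_{xyy}$ and their second-order analogues: the boost acts with definite weights (e.g. $R_0\mapsto a^{-2}R_0$, $A\mapsto a^{-2}A$, $B\mapsto a^{-3}B$) while the null rotation fixes $R_0,A$ and shears $B\mapsto B-bA$. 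From this one reads off the complete list of scalar invariants of each curvature model, and $k$-curvature homogeneity becomes the assertion that this list is constant on $M$ and that the stratum (the vanishing pattern of the $A$-type components) does not jump.

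For $k=1$ the surviving invariant content reduces, on the open stratum $f_{yyy}\neq0$, to the single ratio $f_{yyy}/f_{yy}$, and on the complementary stratum $f_{yyy}\equiv0$ (equivalently $f_{yy}=\alpha(x)$) to a boost-invariant built from $f_{xyy}$ and $f_{yy}$. Imposing constancy of $f_{yyy}/f_{yy}$ in $y$ is a first-order linear ODE whose integration produces the exponential profile $f_{yy}=\alpha(x)e^{by}$ of family~(b); the degenerate value of this invariant and the degenerate stratum account for the remaining two possibilities, namely the parallel-curvature (symmetric) case (a) and the $y$-independent case (c), the latter cut out by constancy of the residual invariant, i.e. the relation $\alpha=c\,\alpha_x^{3/2}$. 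This yields the trichotomy (a)--(c). For $k=2$ one repeats the procedure including the second covariant derivative: the additional boost weights and null-rotation shears produce finitely many new ratio invariants whose constancy, combined with the first-order conditions, now couples and pins down the $x$-dependence, and integrating the resulting pair of nonlinear ODEs in closed form gives exactly the three families of part~(2).

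The delicate points are twofold. On the analytic side one must integrate the nonlinear ODEs and, crucially, organize the case analysis by which invariants vanish, since the stratification of the orbit space changes the set of available invariants; this is what separates the three families at each order. The main obstacle, however, is the two converses. The ``if'' direction requires exhibiting, for each listed family, an explicit local isometry realizing the normalized curvature model at nearby points, equivalently a local transitive group of isometries. For part~(2) this is genuinely nontrivial: since $m=3$ gives the Singer bound $\tfrac12 m(m-1)=3>2$, two-curvature homogeneity does not imply local homogeneity a priori, so one must show directly that for each of the three families the $2$-jet of the curvature is rigid enough to determine all higher jets and then invoke Theorem~\ref{thm-1.5}(1); the explicit verification of these steps is carried out in \cite{GNS12}.
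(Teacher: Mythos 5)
The first thing to say is that the paper contains no proof of Theorem~\ref{T1.13}: it is imported verbatim from \cite{GNS12} (Theorems 2.10 and 2.12), and the sentence preceding the statement makes this explicit. So there is no internal argument to compare against. That said, your outline is exactly the machinery the paper does deploy in Section~\ref{S9} for the homothety analogue (Theorem~\ref{T1.14}): restrict attention to frames adapted to $\ker(\mathfrak{R})=\operatorname{Span}\{\partial_{\tilde x}\}$ and $\operatorname{Range}(\mathfrak{R})=\operatorname{Span}\{\partial_y,\partial_{\tilde x}\}$ as in Equation~(\ref{E9.a}), observe that after the metric normalizations only $a_{11}$ (your boost) and $a_{12}$ (your null rotation) survive, and read off $R(\xi_1,\xi_2,\xi_2,\xi_1)=a_{11}^2f_{yy}$, $\nabla R(\xi_1,\xi_2,\xi_2,\xi_1;\xi_2)=a_{11}^2f_{yyy}$, $\nabla R(\xi_1,\xi_2,\xi_2,\xi_1;\xi_1)=a_{11}^3(f_{xyy}+a_{12}f_{yyy})$; your weights and the shear $B\mapsto B-bA$ agree with Equations~(\ref{E9.b})--(\ref{E9.c}) specialized to $\lambda=1$. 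Your treatment of part (1) is essentially complete: constancy of $f_{yyy}/f_{yy}$ yields family (b); its vanishing splits into the symmetric case (a) and, on the stratum $f_{yy}=\alpha(x)$, the condition that $\alpha^3/\alpha_x^2$ be constant, which is family (c) (note the printed relation $\alpha=c\,\alpha_x^{3/2}$ in (1c) has the exponent on the wrong factor; the invariant one actually obtains is $\alpha_x=c\,\alpha^{3/2}$, consistent with the solution $a(x-x_0)^{-2}$ appearing in (2b)). The genuine weakness is part (2): ``finitely many new ratio invariants \dots integrating the resulting pair of nonlinear ODEs in closed form gives exactly the three families'' is precisely the content of that assertion and is not carried out --- you would need the second-order components $f_{xxyy}-f_yf_{yyy}$, $f_{xyyy}$, $f_{yyyy}$ and the resulting constraints on $\eta(x)$ in (2a). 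Likewise the claim that each family in (2) is locally homogeneous is nontrivial (as you correctly note, the Singer bound in dimension $3$ is $3$, not $2$, so $2$-curvature homogeneity does not formally suffice) and is simply deferred to \cite{GNS12}, which is also what the paper itself does. As a self-contained proof your proposal is therefore incomplete for part (2); as a reduction to the cited computation it is faithful to the method.
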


We will establish the following analogue of Theorem~\ref{T1.13}
for homothety curvature homogeneity in Section~\ref{S9}:

\begin{theorem}\label{T1.14}
Suppose $f_{yy}$ is never zero and non-constant.\begin{enumerate}
\item If $f_{yyy}$ never vanishes, then
$\mathcal{M}_f$ is homothety 1-curvature
homogeneous.
\item If $f_{yy}=\alpha(x)$ with $\alpha_x$ never zero, then $\mathcal{M}_f$
is homothety 1-curvature homogeneous if and only if
$f=a(x-x_0)^{-2}y^2+\beta(x)y+\gamma(x)$
where $0\ne a\in\mathbb{R}$. This manifold is locally homogeneous.
\item Assume that $\mathcal{M}_f$ is homothety 2-curvature homogeneous,
and that $f_{yy}$ and $f_{yyy}$ never vanish.
Then $\mathcal{M}_f$ is locally isometric to one of the examples:
\begin{enumerate}
\item $\mathcal{M}_{\pm e^{ay}}$ for some $a\ne0$ and
$M=\mathbb{R}^3$. This manifold is
homogeneous.
\item  $\mathcal{M}_{\pm\ln(y)}$ and $M=\mathbb{R}\times(0,\infty)\times\mathbb{R}$. This manifold is a homothety homogeneous manifold which is
not locally homogeneous but which is cohomogeneity one.
\item $\mathcal{M}_{\pm y^\varepsilon}$ for $\varepsilon\ne0,1,2$ and
$M=\mathbb{R}\times(0,\infty)\times\mathbb{R}$. This manifold is a homothety homogeneous
 manifold which is not locally homogeneous but which is cohomogeneity one.
\end{enumerate}\end{enumerate}
\end{theorem}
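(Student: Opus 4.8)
The plan is to reduce homothety $k$-curvature homogeneity of $\mathcal{M}_f$ to the constancy on $M$ of a finite list of scalar functions built from $f$, by working in a frame adapted to the parallel null field and invoking the criterion of Lemma~\ref{lem-1.2}(3). First I would fix the pseudo-orthonormal null frame $E_1=\partial_{\tilde x}$, $E_2=\partial_y$, $E_3=\partial_x+f\partial_{\tilde x}$, so that $g(E_1,E_3)=g(E_2,E_2)=1$ with all other products zero; this realizes one fixed model inner product $\varepsilon$ at every point. A direct computation shows that the only essential curvature component is $R(E_3,E_2,E_3,E_2)=-f_{yy}$, that the essential components of $\nabla R$ are $\nabla_{E_2}R(E_3,E_2,E_3,E_2)=-f_{yyy}$ and $\nabla_{E_3}R(E_3,E_2,E_3,E_2)=-f_{xyy}$, and that at second order one picks up $f_{yyyy}$, $f_{xyyy}$, $f_{xxyy}$ together with lower-order correction terms, while $E_1$ lies in the radical of every $\nabla^\ell R$. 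By Lemma~\ref{lem-1.2}(3), $\mathcal{M}_f$ is homothety $k$-curvature homogeneous if and only if at each point the jet $(R,\dots,\nabla^kR)$ lies in a single orbit of the group generated by the linear isometries of $\varepsilon$ (the boost $E_1\mapsto\sigma E_1,\ E_3\mapsto\sigma^{-1}E_3$ and the two null rotations fixing $E_1$ or $E_3$) together with the scaling $\lambda$, which multiplies the order-$\ell$ component by $\lambda^{-\ell-2}$. The problem thus becomes to compute the functional invariants of this \emph{non-abelian} action on the space of curvature jets and decide when they are constant; note in particular that the boost and the null rotations do not commute, so the surviving invariants are not the naive ratios of components.

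For Parts (1) and (2) I would analyse the order-$1$ jet $(f_{yy},f_{yyy},f_{xyy})$. When $f_{yyy}$ never vanishes, the null rotations act with enough freedom in the relevant directions that there is no non-constant invariant of order $\le 1$; the matching to a fixed model can then be solved pointwise, so $\mathcal{M}_f$ is automatically homothety $1$-curvature homogeneous, which is Part~(1). When instead $f_{yy}=\alpha(x)$, so that $f_{yyy}\equiv 0$, this freedom degenerates and a genuine order-$1$ invariant survives; requiring it to be constant yields an ordinary differential equation for $\alpha$ whose only solutions with $\alpha_x$ nowhere zero are $\alpha(x)=2a(x-x_0)^{-2}$. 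Integrating twice gives $f=a(x-x_0)^{-2}y^2+\beta(x)y+\gamma(x)$, which is locally homogeneous by Theorem~\ref{T1.13}(2)(b); the converse direction follows by explicitly exhibiting the frame and the function $\lambda$ that realize the model for this $f$.

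For Part (3) I would pass to the order-$2$ jet under the standing hypotheses that $f_{yy}$ and $f_{yyy}$ never vanish. After using the boost, the null rotations and $\lambda$ to normalize the order-$\le 1$ data (possible by Part~(1)), homothety $2$-curvature homogeneity reduces to the constancy of the invariants formed from $f_{yy}$, $f_{yyy}$, $f_{yyyy}$ and the mixed $x$-derivatives. The heart of the argument is that these force a single second-order ordinary differential equation for the profile $\phi:=f_{yy}$ in the variable $y$, together with conditions ruling out genuine $x$-dependence; integrating its solutions twice and discarding the removable terms $\beta(x)y+\gamma(x)$ identifies $f$, up to local isometry and up to an overall sign coming from the sign of $f_{yy}$, with $\pm e^{ay}$, $\pm\ln y$, or $\pm y^{\varepsilon}$. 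The exponents $\varepsilon=0,1,2$ are excluded because they make $f_{yy}$ vanish or constant, contrary to hypothesis. I would then record the geometry of each family: $\mathcal{M}_{\pm e^{ay}}$ is locally homogeneous by Theorem~\ref{T1.13}, whereas for $\mathcal{M}_{\pm\ln y}$ and $\mathcal{M}_{\pm y^{\varepsilon}}$ one checks directly that the isometry group has codimension-one orbits while a genuine homothety enlarges the action to a transitive one, so that these manifolds are cohomogeneity one and homothety homogeneous but not locally homogeneous.

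The main obstacle is Part (3): deriving the precise order-$2$ invariants (the covariant derivatives contribute Christoffel correction terms that must be tracked carefully), proving that their constancy forces exactly the governing ordinary differential equation with no further hidden constraints, and enumerating its solutions correctly, including the degenerate exponents and the resonant logarithmic solution, so that the list is complete and non-redundant. A secondary subtlety is that, by Theorem~\ref{T1.12}, every $\mathcal{M}_f$ is VSI, so the level-set slices and the distance arguments of Theorem~\ref{T1.6} are \emph{not} available here; the cohomogeneity-one assertions for families (b) and (c) must instead be established by explicitly producing the Killing and homothety vector fields of the metric and verifying the orbit structure of the resulting groups.
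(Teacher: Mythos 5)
Your proposal follows essentially the same route as the paper: an adapted null frame preserving $\ker(\mathfrak{R})\subset\operatorname{Range}(\mathfrak{R})$, normalization of the free parameters $(\lambda,a_{11},a_{12})$ against $(f_{yy},f_{yyy},f_{xyy})$ to prove (1) and to extract the order-one invariant giving $\alpha=a(x-x_0)^{-2}$ in (2), and in (3) the second-order invariant $f_{yy}f_{yyyy}/f_{yyy}^2$ yielding the ODE in $y$ while the mixed components $\nabla^2R(\cdot;\xi_1,\xi_2)$ and $\nabla^2R(\cdot;\xi_1,\xi_1)$ kill the $x$-dependence, followed by explicit transitive homothety actions for the three families. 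Your closing observations — that the removable terms $\beta(x)y+\gamma(x)$ are absorbed by a coordinate change, that the logarithmic solution must be tracked separately, and that cohomogeneity one must be verified by exhibiting the group action directly since these VSI manifolds are outside the scope of Theorem~\ref{T1.6} — are exactly the points the paper's proof addresses.
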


We also refer to recent work of Dunn and McDonald \cite{DuMc2013}
for related work on homothety curvature homogeneous manifolds.

\subsection{Variable homothety curvature homogeneity}
In fact, the definition we have used in this paper differs subtly but in an
important fashion from that originally given by Kowalski and Van\v{z}urov\'{a} \cite{KV12};
in that paper the scaling constant $\lambda$ was permitted to depend
on $\ell$ and this gives rise to the notion of
{\it variable homothety $k$-curvature homogeneity}. There are 4 different
definitions which may be summarized as follows; we repeat two of
the definitions to put the new definitions in context:
\begin{definition}\label{D1.15}
\rm
Let $\mathfrak{M}_k:=\{V,\langle\cdot,\cdot\rangle,{ A^0,\dots,A^k}\}$ be
a $k$-curvature model and let $\mathcal{M}=(M,g)$ be a pseudo-Riemannian manifold.
\begin{enumerate}
\item Static isometries that are independent of $k$. Recall that:
\begin{enumerate}
\item $\mathcal{M}$ is {\it $k$-curvature homogeneous with model $\mathfrak{M}_k$}
 if for any $P$ in $M$, there exists an isometry
 $\phi_P:T_PM\rightarrow V$ so
$\phi_P^*{ A^\ell}=\nabla^\ell R_P$ for $0\le\ell\le k$.
\item $\mathcal{M}$ is {\it homothety $k$-curvature homogeneous with model $\mathfrak{M}_k$}
 if for any $P$ in $M$, there is an isometry $\phi_P:T_PM\rightarrow V$
 and a scaling factor $0\ne\lambda\in\mathbb{R}$ so that
$\phi_P^*{ A^\ell}=\lambda^{2+\ell}R_P$ for $0\le\ell\le k$.
\end{enumerate}
\item Variable isometries that depend on $k$. We shall say that:
\begin{enumerate}
\item $\mathcal{M}$ is
{\it variable-$k$-curvature homogeneous with model $\mathfrak{M}_k$}
if for every $P$ in $M$ and if for every $0\le\ell\le k$,
there exist isometries $\phi_{P,\ell}:T_PM\rightarrow V$ so that
$\phi_{P,\ell}^*{ A^\ell}=\nabla^\ell R_P$.
\item We say that $\mathcal{M}$ is
{\it variable homothety $k$-curvature homogeneous with model
$\mathfrak{M}_k$} if for every$P$ in $M$ and if for every $0\le\ell\le k$,
there are linear isometries $\phi_{P,\ell}:T_PM\rightarrow V$ and scaling factors
$0\ne\lambda_\ell\in\mathbb{R}$ so that
$\phi_{P,\ell}^*{ A^\ell}=\lambda_\ell^{2+\ell}R_P$.
\end{enumerate}
\end{enumerate}\end{definition}
In Section~\ref{sect-7}, we will use the examples which were studied
in Section~\ref{S9} to show that Theorem~\ref{thm-1.5} fails in the
context of variable curvature homogeneity and hence also for variable
homothety curvature homogeneity:
\begin{theorem}\label{T1.16}
\ \begin{enumerate}
\item Let $k$ be a positive integer.
There exists $f_k$ so that $\mathcal{M}_{f_k}$ is
variable $k$-curvature homogeneous but not variable $k+1$-curvature homogeneous.
\item The manifold $\mathcal{M}_{\frac12e^xy^2}$ is
variable $k$-curvature homogeneous for all $k$, but is not homothety 1-curvature homogeneous and hence not locally homogeneous.
\item In Definition~\ref{D1.15} we have the following implications:
\smallbreak\centerline{
$(1a)\quad\Rightarrow\quad(1b)\quad\Rightarrow\quad(2b)\quad\text{and}\quad
(1a)\quad\Rightarrow\quad(2a)\quad\Rightarrow\quad(2b)$\,.}
\smallbreak\noindent
All other possible implications are false.
\end{enumerate}
\end{theorem}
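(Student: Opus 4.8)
The plan is to reduce every assertion to explicit computations of $\nabla^\ell R$ on the Walker manifolds $\mathcal{M}_f$, carried out in Section~\ref{S9}, and to read off the various homogeneity types from the action of the tangent space gauge group. Introduce the null frame $e_1=\partial_{\tilde x}$, $e_2=\partial_y$, $e_3=\partial_x+f\partial_{\tilde x}$, for which $g(e_1,e_3)=g(e_2,e_2)=1$ and all other inner products vanish; the linear isometries of $(T_PM,g_P)$ are generated by the boosts $B_c\colon e_1\mapsto ce_1,\ e_3\mapsto c^{-1}e_3,\ e_2\mapsto e_2$, the null rotations fixing the parallel direction $e_1$, and the reflection $e_2\mapsto-e_2$, while a homothety rescales $g$ and hence multiplies $\nabla^\ell R$ by the graded factor $\lambda^{\ell+2}$ (cf.\ Lemma~\ref{lem-1.2}). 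Since $R$ is a multiple of $f_{yy}$ and each $\nabla^\ell R$ is governed by the derivatives $\partial_x^a\partial_y^bf$, every notion in Definition~\ref{D1.15} becomes a statement about a finite list of scalar invariants $I_\ell(P)$ obtained by normalizing the components of $\nabla^\ell R_P$ under boosts and null rotations: variable curvature homogeneity (2a) holds through order $k$ exactly when each $I_\ell$ ($\ell\le k$) is constant, whereas the homothety notions (1b),(2b) permit these invariants to be adjusted by the extra factor $\lambda^{\ell+2}$.

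With this dictionary in place, the forward implications in Assertion~(3) are immediate: one takes all scaling factors equal to $1$ to pass from a static non-homothety condition to its homothety refinement, and one reuses the single $\ell$-independent isometry as the family of variable isometries; this gives $(1a)\Rightarrow(1b)$, $(1a)\Rightarrow(2a)$, $(1b)\Rightarrow(2b)$ and $(2a)\Rightarrow(2b)$. Everything then rests on producing two families of counterexamples that realize the two distinct ways in which the extra freedoms (variable isometries versus a homothety factor) are genuinely independent.

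For the first family I would analyze $\mathcal{M}_{\frac12e^xy^2}$ of Assertion~(2). Here $f_{yy}=e^x$ while $f_{yyy}=0$, so at each order only the component built from the $x$-derivatives survives, and a boost together with a null rotation normalizes $\nabla^\ell R_P$ at every level, giving variable $k$-curvature homogeneity for all $k$. Because $f_{yyy}=0$ the null rotations leave the surviving level $0$ and level $1$ components unchanged, so matching them to a fixed model reduces to choosing a boost $c$ and a homothety factor $\lambda$; the resulting constraints, which force $(c\lambda)^{-2}\propto e^x$ at level $0$ and $(c\lambda)^{-3}\propto e^x$ at level $1$, cannot hold simultaneously where $x$ varies. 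Hence $\mathcal{M}_{\frac12e^xy^2}$ is not homothety $1$-curvature homogeneous. Since $(1a)\Rightarrow(1b)$ and $(2a)\Rightarrow(2b)$, this single example simultaneously refutes $(2a)\Rightarrow(1b)$, $(2a)\Rightarrow(1a)$, $(2b)\Rightarrow(1b)$ and $(2b)\Rightarrow(1a)$, and it also establishes Assertion~(2). The remaining non-implications $(1b)\Rightarrow(1a)$, $(1b)\Rightarrow(2a)$ and $(2b)\Rightarrow(2a)$ require a ``dual'' example which is homothety $k$-curvature homogeneous but for which some single level fails to have constant isometry type. The homothety homogeneous examples of Theorem~\ref{T1.14}(3) carry no $x$-dependence and are in fact variable $k$-curvature homogeneous for all $k$, so they do not serve here; instead I would exploit that the invariant $I_\ell$ carries a nonzero homothety weight, so that the homothety factor $\lambda^{\ell+2}$ renders the homothety-invariant combination $I_\ell\,f_{yy}^{\,p}$ (for a suitable power $p$) constant while $I_\ell$ itself varies. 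Concretely I would choose $f$ with genuine $x$-dependence, so that at some order two components of differing boost weight are simultaneously non-zero and are not related by a null rotation, tuned so that $I_\ell$ is non-constant but its homothety-corrected version is constant.

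Finally, for Assertion~(1) the plan is to build $f_k$ by the same bookkeeping: starting from a function for which the invariants $I_0,\dots,I_k$ are constant, add an $x$-dependent term of order $y^{k+3}$ that first contributes to $\partial_y^{k+3}f$, so that $I_{k+1}$ becomes non-constant while $I_\ell$ for $\ell\le k$ is unchanged; this yields variable $k$-curvature homogeneity without variable $(k+1)$-curvature homogeneity and, in view of Theorem~\ref{thm-1.5}, shows that no finite order of variable curvature homogeneity can force local homogeneity. The main obstacle throughout is the explicit computation of $\nabla^\ell R$ to all orders and, more delicately, the correct identification of the residual invariants after quotienting by the full isometry group: the null rotations substantially enlarge the gauge group and can absorb cross terms, so one must check carefully at each level which combinations of the derivatives of $f$ are genuine obstructions rather than artifacts removable by a null rotation.
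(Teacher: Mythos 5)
Your treatment of the forward implications and of Assertion~(2) matches the paper's: the paper also works with $f=\frac12\alpha(x)y^2$, for which the only nonzero components are $\nabla^\ell R(\partial_x,\partial_y,\partial_y,\partial_x;\partial_x,\dots,\partial_x)=\alpha^{(\ell)}(x)$, normalizes each level separately by the boost $a_{11}$, and rules out homothety $1$-curvature homogeneity of $\mathcal{M}_{\frac12e^xy^2}$ exactly as you do (via Theorem~\ref{T1.14}~(2)). However, there are two genuine gaps. First, your construction for Assertion~(1) does not work as described: an $x$-dependent perturbation of order $y^{k+3}$ contributes to $\partial_x^a\partial_y^{b+2}f$ for \emph{every} $b\le k+1$, hence to $\nabla^\ell R$ at every level $\ell\le k+1$, not only at level $k+1$; it both changes the lower invariants and, worse, turns on additional components at levels $\ell\ge 2$ (where there are more components than the two gauge parameters $a_{11},a_{12}$), so variable $k$-curvature homogeneity of the perturbed metric is not preserved and is not argued. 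The paper instead keeps $f=\frac12\alpha(x)y^2$, sets $\alpha^{(k)}=e^{-x^2}$ and integrates downward so that exactly one component per level survives; the obstruction at level $k+1$ is then that $\alpha^{(k+1)}$ vanishes at $x=0$ but not elsewhere, and a component that is sometimes zero and sometimes not cannot be normalized to a constant by any isometry.

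Second, for the non-implications $(1b)\not\Rightarrow(2a)$ and $(2b)\not\Rightarrow(2a)$ you do not produce a counterexample: you dismiss the candidates of Theorem~\ref{T1.14}~(3) (with an unproven, and in fact doubtful, claim that they are variable $k$-curvature homogeneous for all $k$) and then only sketch a ``tuning'' of some $f$ without exhibiting it or verifying either property. This is the one place where the paper leaves the Walker family altogether: it takes $(\mathbb{R}\times S^2,\,e^{tx}(dx^2+g_2))$, which is homothety homogeneous by Theorem~\ref{T1.10} (hence satisfies (1b) for all $k$ by Theorem~\ref{thm-1.5}), while by the curvature computation of Section~\ref{S7} its scalar curvature $e^{-tx}\{\tau_{S^2}-\frac{(m-1)(m-2)}4t^2\}$ is non-constant for generic $t$, so it is not $0$-curvature homogeneous; since variable $0$-curvature homogeneity coincides with $0$-curvature homogeneity, (2a) fails already at level $0$. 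This move is essentially forced, because every $\mathcal{M}_f$ with $f_{yy}$ nowhere zero \emph{is} $0$-curvature homogeneous (normalize $a_{11}^2f_{yy}=\pm1$), so no Walker example can break the implication at level $0$. Relatedly, you never give a concrete witness for $(1b)\not\Rightarrow(1a)$; the paper gets one by combining Theorem~\ref{T1.14}~(1) (any $f$ with $f_{yyy}$ nowhere zero is homothety $1$-curvature homogeneous) with the classification in Theorem~\ref{T1.13}~(1) of which such $f$ are actually $1$-curvature homogeneous.
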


\section{The proof of Lemma~\ref{lem-1.2}}\label{sect-2}

Assume that Assertion~(1) of Lemma~\ref{lem-1.2} holds.
This means that given any two points $P$ and $Q$ in $M$,
there exists a linear homothety
$\Phi:T_PM\rightarrow T_QM$ so that if $0\le\ell\le k$
and if $\{x_i\}$ are vectors in $T_PM$, then we have that:
\begin{eqnarray*}
&&g_Q(\Phi x_1,\Phi x_2)=\lambda^2g_P(x_1,x_2),\text{ and}\\
&&\Phi\left\{\nabla^\ell\mathfrak{R}_P(x_1,x_2;x_5,{ \dots,}x_{\ell+4})x_3\right\}
=\nabla^\ell\mathfrak{R}_Q(\Phi x_1,\Phi x_2;\Phi x_5,\dots,\Phi x_{\ell+4})\Phi x_3\,.
\end{eqnarray*}
Taking the inner product with $\Phi x_4$ permits us to rewrite the
second condition, which involves the curvature operator,
in terms of the curvature tensor:
\begin{eqnarray*}
&&\lambda^2\nabla^\ell R_P(x_1,x_2,x_3,x_4;x_5,\dots,x_{\ell+4})\\
&=&\lambda^2g_P\left(\nabla^\ell\mathfrak{R}_P(x_1,x_2;x_5,\dots,x_{\ell+4})x_3,x_4\right)\\
&=&g_Q(\Phi\nabla^\ell\mathfrak{R}_P(x_1,x_2;x_5,\dots,x_{\ell+4})x_3,\Phi x_4)\\
&=&g_Q(\nabla^\ell\mathfrak{R}_Q(\Phi x_1,\Phi x_2;\Phi x_5,\dots,\Phi x_{\ell+4})\Phi x_3,\Phi x_4)\\
&=&\nabla^\ell R_Q(\Phi x_1,\Phi x_2,\Phi x_3,\Phi x_4;\Phi x_5,\dots,\Phi x_{\ell+4})\,.
\end{eqnarray*}
We set $\phi:=\lambda^{-1}\Phi$. We can rewrite these equations in the form:
\begin{eqnarray*}
&&g_Q(\phi x_1,\phi x_2)=\lambda^{-2}g_Q(\Phi x_1,\Phi x_2)=g_P(x_1,x_2),\\
&&\lambda^2\nabla^\ell R_P(x_1,x_2,x_3,x_4;x_5,\dots,x_{\ell+4})\\
&=&\nabla^\ell R_Q(\Phi x_1,\Phi x_2,\Phi x_3,\Phi x_4;\Phi x_5,\dots,\Phi x_{\ell+4})\\
&=&\lambda^{\ell+4}\nabla^\ell R_Q(\phi x_1,\phi x_2,\phi x_3,\phi x_4;\phi x_5,\dots,\phi x_{\ell+4})\\
&=&\lambda^{\ell+4}\phi^*(\nabla^\ell R_Q)(x_1,x_2,x_3,x_4;x_5,\dots,x_{\ell+4})\,.
\end{eqnarray*}
This shows $\phi$ is an isometry from $T_PM$ to $T_QM$ so
$\phi^*(\nabla^\ell R_Q)=\lambda^{-2-\ell}\nabla^\ell R_P$. Consequently in
Lemma~\ref{lem-1.2}, Assertion~(1) $\Rightarrow$
Assertion~(2);
the proof of the converse implication is similar and will be omitted.

Suppose that Assertion~(2) of Lemma~\ref{lem-1.2} holds.
Fix a basis $\{\xi_1^{P_0},\dots,\xi_m^{P_0}\}$
for $T_{P_0}M$ where $P_0$ is the base point of $M$. Set 
$c_{i_1,\dots,i_{\ell+4}}:=\nabla^\ell R(\xi_{i_1},\dots,\xi_{i_{\ell+4}})$ and
set $\varepsilon_{ij}:=g_{P_0}(\xi_i,\xi_j)$.
Let $Q\in M$. By assumption,
there is an isometry $\phi$ from $T_{P_0}M$ to $T_QM$ so:
$$\phi^*(\nabla^\ell R_Q)=\lambda_{Q}^{-\ell-2}\nabla^\ell R_{P_0}\text{ for }0\le\ell\le k\,.$$
Set $\xi_i^Q:=\phi\xi_i^{P_0}$. Then:
\begin{eqnarray*}
&&g_Q(\xi_i^Q,\xi_j^Q)=g_Q(\phi\xi_i^P,\phi\xi_j^P)=g_{P_0}(\xi_i^{P_0},\xi_j^{P_0})=\varepsilon_{ij},\\
&&\nabla^\ell R_Q(\xi_{i_1}^Q,\dots,\xi_{i_{\ell+4}}^Q)=
\nabla^\ell R_Q(\phi\xi_{i_1}^{P_0},\dots,\phi\xi_{i_{\ell+4}}^{P_0})\\
&&\qquad=
\lambda_{Q}^{-\ell-2}\nabla^\ell R_{P_0}(\xi_{i_1}^{P_0},\dots,\xi_{i_{\ell+4}}^{P_0})=
\lambda_{Q}^{-\ell-2}c_{i_1,\dots,i_{\ell+4}}\,.
\end{eqnarray*}
This shows in Lemma~\ref{lem-1.2} that Assertion~(2)
$\Rightarrow$ Assertion~(3); the proof of the converse implication is
similar and will be omitted.
\hfill\qed

\section{The proof of Theorem~\ref{thm-1.5}}\label{sect-3}

Fix the signature $(p,q)$ throughout this section where $m=p+q$.
Let $(V,\langle\cdot,\cdot\rangle)$ be an inner product space
of signature $(p,q)$.
Let $\mathcal{O}$ and $\mathcal{HO}$
be the linear orthogonal group and linear homothety group of signature $(p,q)$
respectively;
\begin{eqnarray*}
&&\mathcal{O}:=\{T\in\operatorname{GL}(V):
T^*\langle\cdot,\cdot\rangle=\langle\cdot,\cdot\rangle\},\\
&&\mathcal{HO}:=\{T\in\operatorname{GL}(V):
T^*\langle\cdot,\cdot\rangle=\lambda^2\langle\cdot,\cdot\rangle\}
\text{ for some }0<\lambda\in\mathbb{R}\}\,.
\end{eqnarray*}
Note that $\mathcal{HO}=\mathbb{R}^+\times\mathcal{O}$.
Fix a basis $\{v_1,\dots,v_m\}$ for $V$
and let $\epsilon_{ij}:=\langle v_i,v_j\rangle$. Let $\mathcal{M}=(M,g)$
be a pseudo-Riemannian manifold of signature $(p,q)$. Let
$\mathcal{F}(M)$ be the bundle of frames $\vec u=(u_1,\dots,u_m)$ for
$TM$.
Let $\mathcal{O}(\mathcal{M})$ and $\mathcal{HO}(\mathcal{M})$
be the sub-bundles:
\begin{eqnarray*}
&&\mathcal{O}(\mathcal{M}):=\{\vec u\in\mathcal{F}(M):
g(u_i,u_j)=\epsilon_{ij}\},\\
&&\mathcal{HO}(\mathcal{M}):=\{\vec u\in\mathcal{F}(M):
g(u_i,u_j)=\lambda^\epsilon_{ij}
\text{ for some }0<\lambda\in\mathbb{R}\}\,.
\end{eqnarray*}
Then $\mathcal{O}(M)$ is a principal $\mathcal{O}$ bundle while
$\mathcal{HO}(M)$ is a principal $\mathcal{HO}$ bundle.
We use the Levi-Civita connection, which is invariant under homothety
transformations, to define a $\mathcal{HO}$ structure on $\mathcal{M}$
with a canonical connection.

Let $\mathfrak{ho}(T_P\mathcal{M})$ be the Lie algebra of the group of
homothety transformations of $T_P\mathcal{M}$. Let
$$
s_0:=\dim\{\mathfrak{ho}(T_P\mathcal{M})\}=\frac12m(m-1)+1\,.
$$
For $0\le s\le s_0$, we consider the subalgebras defined by:
\begin{eqnarray*}
&&\mathfrak{ho}^0(T_P\mathcal{M})
=\{ a\in\mathfrak{ho}(T_P\mathcal{M})\,;\,\, a\cdot R=0\},\\
&&\mathfrak{ho}^s(T_P\mathcal{M})
=\{ a\in\mathfrak{ho}^{s-1}(T_P\mathcal{M})\,;\,\, a\cdot \nabla^sR=0\}\,.
\end{eqnarray*}
Clearly
$$
\mathfrak{ho}^s(T_P\mathcal{M})\supset
\mathfrak{ho}^{s+1}(T_P\mathcal{M})\text{ for all }s$$
so we have a decreasing sequence of subalgebras of
$\mathfrak{ho}(T_P\mathcal{M})$.
Let the {\it Singer number} $s(P)$ be the first integer stabilizing this
sequence above, i.e.:
$$
\mathfrak{ho}^{s(P)+r}(T_P\mathcal{M})
=\mathfrak{ho}^{s(P)}(T_P\mathcal{M})\text{ for all }r\geq 1\,.
$$
Now the assumption that $(\mathcal{M},g)$ is
homothety $k$-curvature homogeneous for some $k\ge\frac12m(m-1)+1$
shows that the Singer number $s(P)$ is constant on $M$.
The equivalences of Theorem~\ref{thm-1.5}~(2) now
follow from the work of Podesta and Spiro \cite{PS96}.
(See also \cite{Op} for an extension to the affine setting.)

\section{The proof of Theorem~\ref{T1.6}}\label{S4}
Let $\mathcal{M}=(M,g)$ be a pseudo-Riemannian manifold which is homothety
homogeneous with non-trivial homothety multiplicative character $\lambda$ and
which is not VSI. Let $\mathcal{R}$ be a
scalar Weyl invariant which does not vanish on $M$. Let $\mu_{\mathcal{R}}$ be as
defined in Equation~(\ref{E1.b}). We use Equation~(\ref{E1.a}) to see that 
$\Phi^*\mu_{\mathcal{R}}=\lambda(\Phi)\mu_{\mathcal{R}}$ and thus
$\Phi^*d\mu_{\mathcal{R}}=\lambda(\Phi)d\mu_{\mathcal{R}}$. Since $\mathcal{H}$
acts transitively on $M$, either $d\mu_{\mathcal{R}}$ never vanishes or $d\mu_{\mathcal{R}}$
vanishes identically; this latter possibility would imply $\mu_{\mathcal{R}}$ constant and
hence $\lambda$ would be the trivial character which is false by assumption. Thus
$d\mu_{\mathcal{R}}$ is never zero and the level sets $M_c^{\mathcal{R}}$ are
smooth submanifolds of $M$ which have codimension~$1$. Furthermore,
$$
\Phi:M_c^{\mathcal{R}}\rightarrow M_{\lambda(\Phi)c}^{\mathcal{R}}\text{ for any }
\Phi\in\mathcal{H}\,.
$$
Since $\mathcal{H}$
acts transitively on $M$, $\mathcal{I}:=\ker(\lambda)$ acts transitively on $M_c^{\mathcal{R}}$.
Consequently,
$M_1^{\mathcal{R}}=\mathcal{I}\cdot P_0$ and $M_c^{\mathcal{R}}=\Phi\cdot\mathcal{I}\cdot P_0$ for any $\Phi\in\mathcal{H}$
with $\lambda(\Phi)=c$. This shows the level sets $M_c:=M_c^{\mathcal{R}}$ are in fact independent
of the choice of $\mathcal{R}$ which establishes Assertion~(1) of Theorem~\ref{T1.6}.

We begin the proof of Theorem~\ref{T1.6}~(2) by establishing the following result:

\begin{lemma}\label{L4.1}
Let $\mathcal{M}=(M,g)$ be a Riemannian manifold which is homothety
homogeneous with non-trivial homothety character $\lambda$ and which is not VSI. 
\begin{enumerate}
\item Let $c$ sufficiently close to $d$. Let $P\in M_c$.
There is a unique point $Q\in M_d$ which is the closest point to $P$ in $M_d$;
$d(P,Q)=d(M_c,M_d)$.
If $\sigma$ is the shortest unit speed geodesic from $P$ to $Q$, then
$\sigma$ is perpendicular to $M_c$ at $P$ and to $M_d$ at $Q$.
\item Let $\sigma:[0,T]\rightarrow M$ be a unit speed geodesic which is 
perpendicular to $M_{\mu_{\mathcal{R}}(\sigma(0))}$ at $\sigma(0)$.
Then $\sigma$ is perpendicular to $M_{\mu_{\mathcal{R}}(\sigma(t))}$ for any $t$ in the interval $[0,T]$.
Furthermore, $\sigma[t_0,t_1]$ is a curve which minimizes the distance 
from $M_{\mu_{\mathcal{R}}(\sigma(t_0))}$ to $M_{\mu_{\mathcal{R}}(\sigma(t_1))}$ 
for any $0\le t_0<t_1\le T$.\end{enumerate}\end{lemma}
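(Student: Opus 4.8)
The plan is to exploit the fact that, since $\mathcal{I}=\ker(\lambda)$ acts by isometries and its orbits are precisely the level sets $M_c$, the pair $(\mathcal{M},\mathcal{I})$ is an isometric cohomogeneity-one action all of whose orbits are principal (the homotheties carry $M_c$ diffeomorphically onto $M_{\lambda(\Phi)c}$, so no orbit is exceptional). I would set $\mu:=\mu_{\mathcal{R}}$ and record the facts already established in the proof of Theorem~\ref{T1.6}~(1): $d\mu$ never vanishes, $\Phi^*\mu=\lambda(\Phi)\mu$, and $\mu$ is $\mathcal{I}$-invariant. In the Riemannian setting the normalized gradient $\nu:=\nabla\mu/|\nabla\mu|$ is then a globally defined unit field orthogonal to every $M_c$. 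Because $\mathcal{I}$ acts transitively by isometries on each orbit and preserves $\mu$, the norm $|\nabla\mu|$ is constant on each $M_c$, so $|\nabla\mu|=h(\mu)$ for a positive smooth function $h$ of the value of $\mu$ alone. Writing $H$ for an antiderivative of $1/h$, this yields the identity $\langle X,\nu\rangle=d(H\circ\mu)(X)$ for all $X$, which will drive the distance estimates.

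For the orthogonality assertion in part~(2) I would invoke the standard Killing conservation law. The fundamental vector fields of the $\mathcal{I}$-action are globally defined Killing fields whose values at any point span the tangent space of the orbit through that point. For such a field $Y$ and a geodesic $\sigma$, the function $\langle Y,\sigma'\rangle$ is constant, since $\nabla Y$ is skew-adjoint and $\nabla_{\sigma'}\sigma'=0$. As $\sigma'(0)\perp M_{\mu(\sigma(0))}$ means $\langle Y_{\sigma(0)},\sigma'(0)\rangle=0$ for all such $Y$, the conservation law forces $\langle Y_{\sigma(t)},\sigma'(t)\rangle=0$ for all $t$, i.e.\ $\sigma'(t)\perp M_{\mu(\sigma(t))}$. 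Hence $\sigma'(t)=\pm\nu(\sigma(t))$ with the sign constant by continuity, so $\tfrac{d}{dt}(H\circ\mu\circ\sigma)=\langle\sigma',\nu\rangle=\pm1$ and $\mu\circ\sigma$ is strictly monotone along such a geodesic.

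The minimization claim in part~(2) then follows from a one-line Cauchy--Schwarz estimate: for any curve $\gamma$ one has $L(\gamma)\ge\int|\langle\gamma',\nu\rangle|\,dt\ge\big|\int\tfrac{d}{dt}(H\circ\mu\circ\gamma)\,dt\big|=|H(\mu(\gamma(1)))-H(\mu(\gamma(0)))|$, a quantity depending only on the two endpoint level sets. Evaluating on the normal, unit-speed arc $\sigma|_{[t_0,t_1]}$ shows it attains this lower bound, so $\sigma|_{[t_0,t_1]}$ minimizes the distance between the level sets and $\operatorname{dist}(M_a,M_b)=|H(b)-H(a)|$. This argument is global and needs no smallness hypothesis.

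For part~(1) I would first note that, by transitivity of $\mathcal{I}$ on $M_c$ together with invariance of $M_d$, the function $x\mapsto\operatorname{dist}(x,M_d)$ is constant on $M_c$, equal to $D:=\operatorname{dist}(M_c,M_d)=|H(d)-H(c)|$. For $d$ close to $c$ the normal geodesic $s\mapsto\exp_P(s\nu(P))$ is defined up to arc length $D$ and, by part~(2), meets $M_d$ at a point $Q$ with $\operatorname{dist}(P,Q)=D$; this gives existence of a closest point and orthogonality at $Q$. Orthogonality at $P$ follows from the first variation of arc length, since every point of $M_c$ realizes $\operatorname{dist}(M_c,M_d)$; and uniqueness follows because any minimizing geodesic from $P$ to $M_d$ realizes $\operatorname{dist}(M_c,M_d)$, hence leaves $P$ orthogonally to $M_c$, hence in the direction $\pm\nu(P)$, and the strict monotonicity of $\mu$ along normal geodesics selects a single sign when $c\ne d$. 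I expect the main obstacle to be exactly this part~(1): one must ensure that the infimum distance is actually attained and that the normal geodesic reaches $M_d$ before leaving $M$, which is why the hypothesis ``$c$ sufficiently close to $d$'' is imposed, the manifold being incomplete (Theorem~\ref{T1.6}~(2)) so that focal points or the cut locus could otherwise interfere for widely separated level sets.
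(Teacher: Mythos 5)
Your argument is correct, but it proceeds in essentially the opposite direction from the paper's. The paper proves Assertion~(1) first by a purely metric argument: pick a closest point $Q_1\in M_d$ to $P$, use the classical ``cut off the leg'' (first variation) argument to force orthogonality of the minimizing geodesic at both endpoints, and then transport points around with isometries $\phi\in\mathcal{I}$ (which preserve each level set and permute distances) to get uniqueness of $Q$ and the identity $d(P,Q)=d(M_c,M_d)$; Assertion~(2) is then dismissed as an immediate consequence of Assertion~(1). You instead prove Assertion~(2) first and directly: the orthogonality statement comes from the conservation law $\langle Y,\sigma'\rangle=\mathrm{const}$ for the Killing fields $Y$ generated by the $\mathcal{I}$-action (whose values span the orbit tangent spaces), and the minimization comes from a calibration argument with the function $H\circ\mu$, where $|\nabla\mu|=h(\mu)$ by $\mathcal{I}$-invariance and $H'=1/h$, giving $L(\gamma)\ge|H(\mu(\gamma(1)))-H(\mu(\gamma(0)))|$ with equality on normal geodesics. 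Your route buys two things: it makes the deduction of (2) genuinely rigorous rather than ``immediate,'' and it produces the explicit formula $\operatorname{dist}(M_a,M_b)=|H(b)-H(a)|$, which already contains the linearity statement $\operatorname{dist}(M_c,M_d)=\kappa|c-d|$ of Theorem~\ref{T1.6}~(2) once one computes $H$ (the paper instead derives $\kappa$ by a separate functional-equation argument with the homotheties $\Phi_1,\Phi_2$). The paper's route is more elementary, using only the metric structure and the group of isometries, with no appeal to gradients or Killing fields. Both proofs share the same unaddressed technical point -- the existence of a minimizing geodesic from $P$ to a closest point of $M_d$ in an incomplete manifold -- which is exactly what the hypothesis that $c$ be sufficiently close to $d$ is meant to absorb, and you are right to flag it explicitly.
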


\begin{proof}
Choose $Q_1\in M_d$ to be a point on $M_d$ which is a
closest point to $P$. There might, a-priori of course, be several such points.
Let $\sigma_1$ be the unit speed geodesic from $P$ to $Q_1$ minimizing the distance
so $\sigma_1(0)=P$ and $\sigma_1(t_1)=Q_1$. 
If $\dot\sigma(t_1)$ is not perpendicular to
$T_{Q_1}M_d$,  then we could ``cut off the leg" to construct a point $Q_2$
of $M_d$ with $d(P,Q_2)<d(P,Q_1)$. Since this would contradict the choice of $Q_1$,
we must have $\dot\sigma(t_1)\perp T_{Q_1}M_d$. 
Next suppose that $\dot\sigma(0)$ is not perpendicular
to $T_PM_c$. Then we could ``cut off the leg" to construct a point $P_1 \in M_c$
so $d(P_1,Q_1)<d(P,Q_1)$. Note that $\mathcal{I}$ acts transitively on $M_c$ for any $c$.
Choose $\phi\in\mathcal{I}$
so $\phi P_1=P$. Then:
$$d(P,\phi Q_1)=d(\phi P_1,\phi Q_1)=d(P_1,Q_1)<d(P,Q_1)$$
which again contradicts the choice of $Q_1$. This shows that the closest point is unique. Given
any other point $P_2\in M_c$, we construct $Q_2$ similarly. Choose an isometry $\phi\in\mathcal{I}$
with $\phi P_2=P$. Then we must have that $\phi Q_2=Q$ and thus
$d(P,Q)=d(P_2,Q_2)$. This proves Assertion~(1);
Assertion~(2) is an immediate consequence of Assertion~(1).\end{proof}

We now prove Theorem~\ref{T1.6}~(2).
Suppose $1<s_1<s_2$. Choose
homotheties $\Phi_i$ so $\lambda(\Phi_i)=s_i$. We then have
$$
\Phi_1M_1=M_{s_1},\quad \Phi_2M_1=M_{s_2},\quad 
(\Phi_1\Phi_2)M_1=\Phi_1M_{s_2}=\Phi_2M_{s_1}=M_{s_1s_2}
\,.$$
Therefore,
 \begin{eqnarray*}
&&d(M_1,M_{s_1s_2})=d(M_1,M_{s_1})
+d(M_{s_1},M_{s_1s_2})\\
&=&d(M_1,M_{s_1})+d(\Phi_1M_1,\Phi_1M_{s_2})
=d(M_1,M_{s_1})+\lambda(\Phi_1)d(M_1,M_{s_2})\\
&=&d(M_1,M_{s_1})+{s_1}d(M_1,M_{s_2})\,.
\end{eqnarray*}
Similarly,  we have that $d(M_1,M_{s_1s_2})=d(M_1,M_{s_2})+{s_2}d(M_1,M_{s_1})$. Thus
$$d(M_1,M_{s_1})+{s_1}d(M_1,M_{s_2})=d(M_1,M_{s_2})+{s_2}d(M_1,M_{s_1})\,.$$
Consequently $d(M_1,M_{s_1})({s_2}-1)=d(M_1,M_{s_2})({s_1}-1)$ so
$$\kappa:=\frac{d(M_1,M_{s_1})}{{s_1}-1}
=\frac{d(M_1,M_{s_2})}{{s_2}-1}$$
is independent of the choice of $s_1$ and $s_2$ for $1<s_1<s_2$.
Let $s<t$. 
Choose a homothety $\Phi$ so $\lambda(\Phi)=s$. Then $\Phi(M_{\frac ts})=M_t$. Since $1<\frac ts$ ,$$d(M_s,M_t)=d(\Phi M_1,\Phi M_{\frac ts})= sd(M_1,M_{\frac ts})
=\textstyle  s\kappa(\frac{ t}{ s}-1)=\kappa( t- s)\,.$$
Let $0<c<1$. Then $d(M_c,M_1)=\kappa(1-c)\le\kappa$.
Let $\sigma$ be a unit speed geodesic with $\sigma(0)\in M_1$, with
$\dot\sigma(0)\perp M_1$,
and with $g(\dot\sigma(0),\operatorname{grad}\mu_{ \mathcal R})=-1$. By Lemma~\ref{L4.1}
 $d(\sigma(t),M_1)=t$. Consequently $t<\kappa$ and the geodesic $\sigma$ does not
extend for infinite time. This shows $\mathcal{M}$ is incomplete which completes the proof of
Theorem~\ref{T1.6}.
\hfill\qed

\section{The proof of Theorem~\ref{T1.8}}\label{S5}

We adopt the notation of Definition~\ref{D1.7} and let
$\mathcal{P}=({ M},g_0,\mathcal{H},\lambda)$ be an $\mathfrak{P}$-structure.
 Let $P_0$ be the base point of $M$ and let
 $\mathcal{H}_0:=\{\Phi\in\mathcal{H}:{ \Phi P_0=P_0}\}$ 
 be the isotropy group. 
By assumption $\mathcal{H}_0\subset\ker(\lambda)$ so $M_1:=\ker(\lambda)/\mathcal{H}_0{ =\lambda^{-1}(\{ 1\})/\mathcal{H}_0}$ is
a smooth submanifold of $M=\mathcal{H}/\mathcal{H}_0$ of codimension $1$. 
Choose a smooth 1-parameter subgroup $\Phi_t$ of $\mathcal{H}$ (written multiplicatively) so that
$\lambda(\Phi_t)=t$ for $t\in\mathbb{R}^+$. This permits us to decompose 
\begin{equation}\label{E5.a}
\mathcal{H}=\mathbb{R}^+\times\ker(\lambda)
\end{equation}
as a differentiable manifold; the multiplicative character $\lambda$ is then projection on the
first factor. The decomposition of Equation~(\ref{E5.a}) will not, of course, in general preserve the
group structure; we shall return to this point in Section~\ref{S6}. Equation~(\ref{E5.a})
induces a corresponding decomposition
$$
{ M}=\mathcal{H}/\mathcal{H}_0=\mathbb{R}^+\times\ker(\lambda)/\mathcal{H}_0
=\mathbb{R}^+\times M_1\,.
$$
The slice $M_t:=\Phi_tM_1$ corresponds to $\{t\}\times M_1$. We have $g_\lambda(t,y)={ t^2}g(t,y)$.
We compute that
\begin{eqnarray*}
&&\{\Phi_s^*(g_\lambda)(t,y)\}(\xi_1,\xi_2)=
\{g_\lambda(st,y)\}((\Phi_s)_*\xi,(\Phi_s)_*\xi_2)\\
&=&{ s^2t^2}\{g_0(st,y)\}((\Phi_s)_*\xi,(\Phi_s)_*\xi_2)
={ s^2t^2}g_0(t,y)(\xi_1,\xi_2)={ s^2}g_\lambda(t,y)(\xi_1,\xi_2)
\end{eqnarray*}
and thus $\Phi_s^*(g_\lambda)={ s^2}g_\lambda$. On the other hand, if $\Phi\in\ker(\lambda)$, then
\begin{eqnarray*}
&&\{\Phi^*(g_\lambda)(t,y)\}(\xi_1,\xi_2)=
\{g_\lambda(t,\Phi y)\}(\Phi_*\xi_1,\Phi_*\xi_2)\\
&=&
{ t^2}g_0(t,\Phi y)(\Phi_*\xi_1,\Phi_*\xi_2)={ t^2}g_0(t,y)(\xi_1,\xi_2)=g_\lambda(t,y)(\xi_1,\xi_2)
\end{eqnarray*}
and thus $\Phi^*g_\lambda=g_\lambda$. 
This implies that $\Phi$ acts by isometries on $(M,g_\lambda)$.
Since we can decompose any $\Phi\in\mathcal{H}$ in the form 
$\Phi=\Phi_t\Psi$ for $\Psi\in\ker(\lambda)$,
we conclude $\Phi^*g_\lambda={ \lambda(\Phi)^2}g_\lambda$ for any 
$\Phi\in\mathcal{H}$. Since $\mathcal{H}$
acts transitively on $M$, this implies $(M,g_\lambda)$ is homothety homogeneous with 
homothety character $\lambda$ and $\Phi:M_t\rightarrow M_{\lambda(\Phi)t}$. 

Conversly, if $(M,g)$ is homothety homogeneous with non-trivial
homothety character $\lambda$ and if $(M,g)$ is not a VSI
manifold, then we can use Theorem~\ref{T1.6} to find the level sets $M_c$. We may
define $g_0(P):=\mu_{\mathcal{R}}(P)^{{ -2}}g(P)$ to define a conformally equivalent manifold
on which $\mathcal{H}$ acts by isometries. The corresponding $\mathfrak{P}$ structure
is then given by $(M,g_0,\mathcal{H},\lambda)$. The fact that the isotropy subgroup $H_0\subset\ker(\lambda)$ then follows; the fact that $\mathcal{M}$ is not VSI plays a crucial role in defining
the level sets.\hfill\qed

\section{The proof of Theorem~\ref{T1.10}}\label{S6}
Let $\mathcal{Q}=(N,g_N,\mathcal{I},\theta)$ be a $\mathfrak{Q}$ structure. Fix a point $P\in N$; which
point is chosen is irrelevant since $(N,g_N)$ is a homogeneous space. If $\theta=0$, then
$g_{t,\mathcal{Q}}$ is non-singular so we suppose $\theta\ne0$. First suppose that $\theta$ is
not a null covector. Choose an orthonormal basis $\{\eta_2,\dots,\eta_m\}$ for  $T_P{ N}$ so that 
${\theta(\eta_i)=0}$ for $i\ge3$, so that ${\theta(\eta_2)=c}$, and so that $g(\eta_i,\eta_j)=\epsilon_i\delta_{ij}$ where $\epsilon_i=\pm1$ for $2\le i,j\le m$. Let $\eta_1=\partial_x$. 
We then have:
$$\det(g_{ij})=e^{mtx}\det\left(\begin{array}{ll}
1&c\\c&\epsilon_2\end{array}\right)\cdot\prod_{i\ge3}\epsilon_i\,.$$
If $\epsilon_2=-1$, then $\det(g_{ij})\ne0$. In this setting $\theta$ is timelike so $g_N(\theta,\theta)<0$.
If $\epsilon_2=+1$, then $\theta$ is spacelike and the metric is non-degenerate
if and only if $c^2\ne1$
or equivalently $g_N(\theta,\theta)\ne1$. Suppose $0\ne\theta$ is a null covector. 
We can choose an
orthonormal basis $\{\eta_2,\dots,\eta_m\}$ for $T_P{ N}$ so $\eta_2$ is spacelike, so $\eta_3$ is
timelike, so $\theta(\eta_2)=\theta(\eta_3)=c$, and so $\theta(\eta_i)=0$ for $i\ge4$. We then have
$$\det(g_{ij})=e^{mtx}\det\left(\begin{array}{rrr}
1&c&c\\c&1&0\\c&0&-1\end{array}\right)\cdot\prod_{i\ge4}\epsilon_i\
=e^{mtx}(-1-c(-c)+c(-c))\ne0\,.
$$
The map $x\rightarrow e^x$ provides a
group isomorphism between $(\mathbb{R},+)$ and $(\mathbb{R}^+,\cdot)$ which we
use to write the character additively and express $\mathcal{H}=\mathbb{R}\times\ker(\lambda)$. 
If $\Phi=(a,\phi)\in\mathcal{H}$, 
let $\Phi:(x,y)\rightarrow(x+a,\phi y)$ define a transitive action of $\mathcal{H}$ on 
$M=\mathbb{R}\times N$. 
Since $\theta$ is an invariant 1-form, we have $\Phi^*g_{t,\mathcal{Q}}=e^{ta}g_{t,\mathcal{Q}}$.
Assertion~(1) now follows.

Conversly, suppose that $\mathcal{M}$ is a homothety homogeneous manifold with non-trivial
homothety character which is not VSI. Suppose the homothety character $\lambda$ splits. We normalize the homothety vector
field $\mathfrak{h}$ (which is assumed to be spacelike)
 so that $g_M(\mathfrak{h},\mathfrak{h})=1$ on $M_1$. We now
write the flow additively to construct
a diffeomorphism of $M$ with $\mathbb{R}\times M_1$. Let $\theta(\eta):=g_M(\mathfrak{h},\eta)$ for
$\eta\in TM_1$. Since $\mathfrak{h}$ is invariant under the action of $\mathcal{I}=\ker(\lambda)$,
$\theta$ is an invariant $1$-form on $M_1$ and the metric for any point of $M_1$ takes the
form $g_M=dx^2+dx\circ\theta+g_{M_1}$. Using $\Phi_x$ to push the metric from $M_1$ to
$M_{e^x1}$ then introduces the desired conformal factor for some $t$.
\hfill\qed

\section{The proof of Theorem~\ref{T1.12xx}}
Suppose that $\mathcal{M}=\mathcal{M}_{t,\mathfrak{Q}}$ is isomorphic to $\mathcal{M}_{\tilde t,\tilde{\mathcal{Q}}}$ for some $\tilde{\mathcal{Q}}$ structure with $\tilde\theta=0$. Then there exists
a homothety vector field $\tilde{\mathfrak{h}}$ on $\mathcal{M}$ which is central and which is perpendicular
to the slices. Thus we can write $\mathfrak{h}=\tilde{\mathfrak{h}}+\xi$ where $\xi$ is a killing vector field.
We then have $\theta(\eta)=g_M(\tilde{\mathfrak{h}},\eta)+g_M(\xi,\eta)=g_M(\xi,\eta)$ and thus
$\theta$ is dual to $\xi$ with respect to the metric $g_N$ on the slice $\{0\}\times N$. Consequently
$\theta_{a;b}+\theta_{b;a}=0$.

Next suppose that $\theta_{a;b}+\theta_{b;a}=0$. The associated dual vector field $\xi$ is then
a Killing vector field which is invariant under the action of $\mathcal{I}$. Since $\mathcal{I}$ acts
transitively on $N$, we can integrate $\xi$ to find a smooth $1$-parameter flow 
$\{\phi_\epsilon\}_{\epsilon\in\mathbb{R}}$ of isometries
which commutes with $\mathcal{I}$. Let $\tilde{\mathcal{I}}$  be the (possibly larger) group of isometries
generated by $\mathcal{I}$ and $\{\phi_\epsilon\}_{\epsilon\in\mathbb{R}}$.
Let $\tilde{\mathcal{Q}}:=(N,g_N,\tilde{\mathcal{I}},0)$
be the extended $\mathfrak{Q}$ structure. Let
\begin{equation}\label{EQX}
\varrho:=(1-||\xi||^2)^{-1/2}\text{ and }s:=\varrho t\,.
\end{equation}
Set
$\tilde{\mathcal{M}}_s=\mathcal{M}_{s,\tilde{\mathfrak Q}}$.
Define a diffeomorphism $\Psi$ of $M=\mathbb{R}\times N$ by setting
$$\Psi_\varrho(x,y):=(x,\phi_{\varrho x}y)\,.$$
We will show that $\Psi_\varrho$ is an isometry from $\mathcal{M}$
to $\tilde{\mathcal{M}}_s$. Fix $(x,y)\in M$. Let $\eta\in T_yN$. We compute:
$$\Psi_*\partial_x=\partial_x+\varrho\phi_{\varrho x,*}\xi\text{ and }
   \Psi_*\eta=\phi_{\varrho x,*}\eta\,.
$$
Since $\phi_{\varrho x,*}$ is an isometry, we have:
\begin{eqnarray*}
&&g_{\tilde{M}_s}
\left(\vphantom{\vrule height 9pt}\Psi_*\partial_x,\Psi_*\partial_x\right)
=e^{sx}\{1+\varrho^2||\xi||^2\},\\
&&g_{\tilde{M}_s}\left(\vphantom{\vrule height 9pt}\Psi_*\partial_x,\Psi_*\eta\right)
=e^{sx}\varrho\theta(\eta),\\
&&g_{\tilde{M}_s}(\left(\vphantom{\vrule height 9pt}\Psi_*\eta_1,\Psi_*\eta_2\right))
=e^{sx}g_N(\eta_1,\eta_2),\\
&&\Psi^*g_{s,\tilde{\mathcal Q}}=
e^{sx}((1+\varrho^2||\xi||^2)dx^2+ \varrho dx\circ\theta+g_N)\,.
\end{eqnarray*}
We use a diffeomorphism $\Upsilon$ to change variables setting $\tilde x=(1+\rho^2||\xi||^2)^{1/2}x$ and 
$\tilde t=(1+\rho^2||\xi||^2)^{-1/2}s$. Then:
$$ \Upsilon^*g_{\tilde{M}_s}=e^{\tilde t\tilde x}(d\tilde x^2+\varrho(1+\rho^2||\xi||^2)^{-1/2}
d\tilde x\circ\theta+g_N)\,.$$
We use the defining relation for $\varrho$ to see
\begin{eqnarray*}
&&\varrho(1+\varrho^2||\xi||^2)^{-1/2}=\left\{\frac1{1-||\xi||^2}\right\}^{1/2}
\left\{1+\frac{||\xi||^2}{1-||\xi||^2}\right\}^{-1/2}=1,\\
&&\tilde t=(1+\rho^2||\xi||^2)^{-1/2}s=\left(1+\frac{||\xi||^2}{1-||\xi||^2}\right)^{-1/2}s
=(1-||\xi||^2)^{1/2}s=t\,.
\end{eqnarray*}
We use the variable $\tilde x$ instead of $x$ to see that $\Psi$ provides an isometry between
$\mathcal{M}$ and $\mathcal{M}_{s,\tilde{\mathcal Q}}$.\hfill\qed

\section{The proof of Theorem~\ref{T1.11}}\label{S7}
Let $\mathcal{M}=\mathcal{M}_{t,\mathcal{Q}}$ where $\mathcal{Q}=(N,g_N,\mathcal{I},\theta)$
is a $\mathfrak{Q}$ structure with $\theta=0$.
We examine the curvature tensor. Fix $t$ and fix a point $P\in N$.
Let $g=g_N$ and let $\tilde g=g_{ t,\mathcal{Q}}$.
Choose local coordinates
$y=(y^1,\dots,y^{m-1})$ centered at $P$.
Let indices $u,v,w$ range from $0$ to $m-1$
and index the coordinate frame
$(\partial_x,\partial_{y_1},\dots,\partial_{y_{m-1}})$;
indices $i,j,k$ range from $1$ to $m-1$ and index the
coordinate frame $(\partial_{y_1},\dots,\partial_{y_{m-1}})$.
Let $\Gamma$ be the Christoffel symbols of $g$ and $\tilde\Gamma$
be the Christoffel symbols of $\tilde g$. Let $\delta_i^j$ be the Kronecker
index. We compute: 
$$\begin{array}{llll}
\tilde g_{00}=e^{tx},&\tilde g_{0i}=0,&\tilde g_{ij}=e^{tx}g_{ij},\\
\noalign{\medskip}
\tilde \Gamma_{00}{}^0=\frac12t&
\tilde \Gamma_{00}{}^i=0,\\
\tilde \Gamma_{0i}{}^0=0,&
\tilde\Gamma_{0i}{}^k=\frac12t\delta_i^k\\
\tilde \Gamma_{ij}{}^0=-\frac12tg_{ij},&
\tilde \Gamma_{ij}{}^k=\Gamma_{ij}{}^k.
\end{array}$$
Thus the covariant derivatives are given by
$$\begin{array}{lll}
\tilde\nabla_{\partial_x}\partial_x=\frac12t\partial_x,&
\tilde\nabla_{\partial_x}\partial_{y_i}=\frac12t\partial_{y_i},\\
\tilde\nabla_{\partial_{y_i}}\partial_x=\frac12t\partial_{y_i},&
\tilde\nabla_{\partial_{y_i}}\partial_{y_j}
=\Gamma_{ij}{}^k\partial_{y_k}-\frac12tg_{ij}\partial_x.
\vphantom{\vrule height 12pt}
\end{array}$$
We choose the coordinate system so the first derivatives of
$g_{ij}$ vanish at $P$ and hence $\Gamma(P)=0$.
Consequently the curvature operator at $P$ is given by:
\medbreak\quad
$\tilde{\mathcal{R}}(\partial_x,\partial_{y_i})\partial_x=
\{\tilde\nabla_{\partial_x}\tilde\nabla_{\partial_{y_i}}
-\tilde\nabla_{\partial_{y_i}}\tilde\nabla_{\partial_x}\}\partial_x
=\frac12t\tilde\nabla_{\partial_x}\partial_{y_i}
-\frac12t\tilde\nabla_{\partial_{y_i}}\partial_x=0$,
\medbreak\quad
$\tilde{\mathcal{R}}(\partial_x,\partial_{y_i})\partial_{y_j}=
\{\tilde\nabla_{\partial_x}\tilde\nabla_{\partial_{y_i}}
-\tilde\nabla_{\partial_{y_i}}\tilde\nabla_{\partial_x}\}\partial_{y_j}$
\smallbreak\qquad\qquad\qquad\quad
$=\tilde\nabla_{\partial_x}
\{\Gamma_{ij}{}^k\partial_{y_k}-\frac12tg_{ij}\partial_x\}
-\frac12t\tilde\nabla_{\partial_{y_i}}\partial_{y_j}$
\smallbreak\qquad\qquad\qquad\quad
$=\frac12t\{\Gamma_{ij}{}^k\partial_{y_k}-\frac12tg_{ij}\partial_x\}
-\frac12t\{\Gamma_{ij}{}^k\partial_{y_k}-\frac12tg_{ij}\partial_x\}=0$,
\medbreak\quad
$\tilde{\mathcal{R}}(\partial_{y_i},\partial_{y_j})\partial_x=
\{\tilde\nabla_{\partial_{y_i}}\tilde\nabla_{\partial_{y_j}}
-\tilde\nabla_{\partial_{y_j}}\tilde\nabla_{\partial_{y_i}}\}\partial_x
=\frac12t\tilde\nabla_{\partial_{y_i}}\partial_{y_j}-
\frac12t\tilde\nabla_{\partial_{y_j}}\partial_{y_i}=0$,
\medbreak\quad
$\tilde{\mathcal{R}}(\partial_{y_i},\partial_{y_j})\partial_{y_k}=
\{\tilde\nabla_{\partial_{y_i}}\tilde\nabla_{\partial_{y_j}}
-\tilde\nabla_{\partial_{y_j}}\tilde\nabla_{\partial_{y_i}}\}\partial_{y_k}$
\smallbreak\qquad\qquad\qquad\quad
$=\tilde\nabla_{\partial_{y_i}}(\Gamma_{jk}{}^\ell\partial_{y_\ell}
-\frac12tg_{jk}\partial_x)
-\tilde\nabla_{\partial_{y_j}}(\Gamma_{ik}{}^\ell\partial_{y_\ell}
-\frac12tg_{ik}\partial_x)$
\smallbreak\qquad\qquad\qquad\quad
$=R_{ijk}{}^\ell\partial_{y_\ell}-\frac14t^2g_{jk}\partial_{y_i}
+\frac14t^2g_{ik}\partial_{y_j}$.
\medbreak\noindent We can now express the scalar curvature and Ricci
tensor $\{\tilde\rho,\tilde\tau\}$ for $\tilde g$ in terms of the scalar
curvature and Ricci tensor $\{\rho,\tau\}$ for $g$ and complete the proof of Theorem~\ref{T1.11}~(4)
by computing:
\medbreak\hfill$
\tilde\rho=\rho-\textstyle\frac{m-2}4t^2g\quad \text{ and }\quad 
\tilde\tau=e^{-tx}\{\tau-\frac{(m-1)(m-2)}4t^2\}$.\hfill\vphantom{.}\qed

\section{The proof of Theorem~\ref{T1.14}}\label{S9}

Let $\mathcal{O}$ be a connected open subset of $\mathbb{R}^3$, let $f=f(x,y)\in C^\infty(\mathcal{O})$,
and let $\mathcal{M}_f=(\mathcal{O},g_f)$ where
$$
g_f(\partial_x,\partial_x)=-2f(x,y),\quad
g_f(\partial_x,\partial_{\tilde x})=g_f(\partial_y,\partial_y)=1\,.
$$
We suppress the subscript ``$f$" when no confusion is likely to ensure.
We follow the discussion in \cite{GNS12}.
The (possibly) non-zero covariant derivatives are given by:
$$\nabla_{\partial_x}\partial_x=-f_x\partial_{\tilde x}
+f_y\partial_y\text{ and }
\nabla_{\partial_x}\partial_y=\nabla_{\partial_y}\partial_x
=-f_y\partial_{\tilde x}\,.$$
This shows that 
$\operatorname{Range}(\mathcal{R})\subset\operatorname{Span}\{\partial_y,\partial_{\tilde x}\}$.
Covariantly differentiating this relationship implies similarly
$\operatorname{Range}(\nabla^k\mathcal{R})\subset\operatorname{Span}\{\partial_y,\partial_{\tilde x}\}$.
for any $k$. Furthermore, $\nabla^k\mathcal{R}(\cdot)$ vanishes if any entry is $\partial_{\tilde x}$
since $\nabla_{\tilde x}=0$ and since the metric is independent of $\tilde x$. Lowering indices
then shows that $\nabla^kR(\cdot)=0$ if any index is $\partial_{\tilde x}$. Thus the only non-zero
entries take the form $\nabla^kR(\partial_x,\partial_y,\partial_y,\partial_x;\dots)$.
Relative to the basis $\{\partial_x,\partial_{\tilde x},\partial_y\}$, the metric tensor $g_{ij}$ and
the inverse metric tensor $g^{ij}$ are given by:
$$(g_{ij})=\left(\begin{array}{rrr}-2f&1&0\\1&0&0\\0&0&1\end{array}\right)\text{ and }
(g^{ij})=\left(\begin{array}{rrr}0&1&0\\1&2f&0\\0&0&1\end{array}\right)\,.   
$$
Thus any Weyl contraction must involve a $\partial_{\tilde x}$ variable; the curvature tensor vanishes
on such variables and thus these manifolds are VSI. This establishes Theorem~\ref{T1.12}.

We now establish Theorem~\ref{T1.14}.
The (possibly) non-zero curvatures and covariant derivatives to order $2$ are:
\begin{eqnarray*}
&&R(\partial_x,\partial_y,\partial_y,\partial_x)=f_{yy},\\
&&\nabla R(\partial_x,\partial_y,\partial_y,\partial_x;\partial_x)=f_{xyy},\\
&&\nabla R(\partial_x,\partial_y,\partial_y,\partial_x;\partial_y)=f_{yyy},\\
&&{ \nabla^2} R(\partial_x,\partial_y,\partial_y,\partial_x;\partial_x,
\partial_x)=f_{xxyy}-f_yf_{yyy},\\
&&\nabla^2R(\partial_x,\partial_y,\partial_y,\partial_x;\partial_x,\partial_y)=
\nabla^2R(\partial_x,\partial_y,\partial_y,\partial_x;\partial_y,
\partial_x)=f_{xyyy},\\
&&\nabla^2R(\partial_x,\partial_y,\partial_y,\partial_x;\partial_y,
\partial_y)=f_{yyyy}.
\end{eqnarray*}

If $f_{yy}$ vanishes identically, then $\mathcal{M}$ is flat.
The vanishing of $f_{yy}$ is an invariant
of the homothety 0-model. Since we are interested in homothety curvature homogeneity,
we shall assume $f_{yy}$
never vanishes; since $M$ is connected, either $f_{yy}$ is always positive or
$f_{yy}$ is always negative.
We shall usually assume $f_{yy}>0$ as the other case is handled similarly.
The simultaneous vanishing of $f_{yyy}$ and of $f_{xyy}$ is an invariant of
the homothety 1-model. The case $f_{yy}=a$ for $0\ne a\in\mathbb{R}$
gives rise to a symmetric space. We shall therefore assume $f_{yy}$
non-constant. This gives rise to
two cases $f_{yyy}$ never zero and $f_{yyy}$ vanishing identically
but $f_{xyy}$ never zero.

\begin{definition}
\rm Let $\mathfrak{M}_{1,c_1,c_2}$ be the 1-curvature model whose (possibly) non-zero components
are defined by $ \varepsilon_{13}=\varepsilon_{22}=1$, $c_{1221}=1$, $c_{12211}=c_1$, and
$c_{12212}=c_2$:
$$\begin{array}{lll}
\langle\xi_1,{\xi_3}\rangle=1,&\langle\xi_2,\xi_2\rangle=1,&
R(\xi_1,\xi_2,\xi_2,\xi_1)=1,\\
\nabla R(\xi_1,\xi_2,\xi_2,\xi_1;\xi_1)=c_1,&
\nabla R(\xi_1,\xi_2,\xi_2,\xi_1;\xi_2)=c_2.
\end{array}$$\end{definition}

\subsection{Proof of Theorem~\ref{T1.14}~(1,2)}
We suppose $f_{yy}>0$. The distributions
$$\ker(\mathfrak{R})=\operatorname{Span}\{\partial_{\tilde x}\}\text{ and }
\operatorname{Range}(\mathfrak{R})=\operatorname{Span}\{\partial_y,\partial_{\tilde x}\}
$$
are invariantly defined. To preserve these distributions, we set:
\begin{equation}\label{E9.a}
\xi_1=a_{11}(\partial_x+f\partial_{\tilde x}+a_{12}\partial_y+a_{13}\partial_{\tilde x}),\quad
    \xi_2=\partial_y+a_{23}\partial_{\tilde x},\quad \xi_3=a_{33}\partial_{\tilde x}\,,
\end{equation}
 for some functions $a_{ij}$ on $\mathcal{O}$.
To ensure that the inner products are normalized properly, we impose the relations:
$$a_{12}^2+2a_{13}=0,\quad a_{12}+a_{23}=0,\quad a_{11}a_{33}=1\,.$$
This determines $a_{13}$, $a_{23}$, and $a_{33}$; these parameters play no further role
and $\{\lambda,a_{11},a_{12}\}$ remain as free parameters where $\lambda$
is the homothety rescaling factor.
We suppose $f_{yyy}\ne0$. Set:
\begin{equation}\label{E9.b}
\lambda:=f_{yyy}f_{yy}^{-1},\quad
a_{12}:=-f_{xyy}f_{yyy}^{-1},\quad
a_{11}^2:=f_{yy}^{-1}\lambda^2\,.
\end{equation}
We then have
\begin{eqnarray}
&&R(\xi_1,\xi_2,\xi_2,\xi_1)=a_{11}^2f_{yy}=\lambda^2,\nonumber\\
&&\nabla R(\xi_1,\xi_2,\xi_2,\xi_1;\xi_1)=a_{11}^3\{f_{xyy}+a_{12}f_{yyy}\}=0,\label{E9.c}\\
&&\nabla R(\xi_1,\xi_2,\xi_2,\xi_1;\xi_2)=a_{11}^2f_{yyy}=\lambda^2f_{yy}^{-1}f_{yyy}=\lambda^3\,.\nonumber
\end{eqnarray}
All the parameters of the theory have been determined (modulo a possible sign ambiguity in $a_{11}$) and
any homothety 1-model for $\mathcal{M}_f$ is isomorphic to $\mathfrak{M}_{1,0,1}$ in this special case. This proves Theorem~\ref{T1.14}~(1).

Suppose $f_{yy}>0$, $f_{yyy}=0$, and $f_{xyy}$ never vanishes. Set $f_{yy}=\alpha(x)$.
The parameter $a_{12}$ plays no role. To ensure that $\mathcal{M}_f$ is homothety 1-curvature
homogeneous, we impose the following relations where $\{a_{11},\lambda\}$ are unknown functions
to be determined and where $\{c_0,c_1\}$ are unknown constants:
\begin{eqnarray*}
&&R(\xi_1,\xi_2,\xi_2,\xi_1)=a_{11}^2(x)\alpha(x)=\lambda^2(x)c_0,\\
&&R(\xi_1,\xi_2,\xi_2,\xi_1;\xi_1)=a_{11}^3(x)\alpha_x(x)=\lambda^3(x)c_1,\\
&&R(\xi_1,\xi_2,\xi_2,\xi_1;\xi_2)=0\,.
\end{eqnarray*}
Consequently, $a_{11}^6(x)\alpha^3(x)=\lambda^6(x)c_0^3$ and
$a_{11}^6(x)\alpha^2_x(x)=\lambda^6(x)c_1^2$.
This shows that $\alpha^3(x)=c_3\alpha_x^2(x)$ for some constant $c_3$.
We solve this ordinary differential equation to see that
$$\alpha(x)=a(x-x_0)^{-2}\text{ for }0\ne a\in\mathbb{R}
\text{ and }x_0\in\mathbb{R}\,.$$
This has the form given in Theorem~\ref{T1.13}~(2b) and defines a locally homogeneous example. Theorem~\ref{T1.14}~(2) now follows.
\hfill\qed

\subsection{The proof of Theorem~\ref{T1.14}~(3)}
We assume that
$f_{yy}$ and $f_{yyy}$ never vanish as this case is the
only possible source of new examples not covered by
Theorem~\ref{T1.13}.
We shall suppose $f_{yy}>0$; the case $f_{yy}<0$ is handled similarly.
As any two homothety 1-curvature models for $\mathcal{M}_f$ are isomorphic,
we can adopt the normalizations of Equation~(\ref{E9.a}),
(\ref{E9.b}), and (\ref{E9.c}). We have:
\begin{eqnarray*}
&&R(\xi_1,\xi_2,\xi_2,\xi_1)=a_{11}^2f_{yy}=\lambda^2,\\
&&\nabla R(\xi_1,\xi_2,\xi_2,\xi_1;\xi_2)=a_{11}^2f_{yyy}=\lambda^3,\\
&&\nabla^2R(\xi_1,\xi_2,\xi_2,\xi_1;\xi_2,\xi_2)=a_{11}^2(x)f_{yyyy}=\lambda^4{ c_{122122}},\\
&&\frac{f_{yy}\cdot f_{yyyy}}{f_{yyy}\cdot f_{yyy}}=
\frac{\lambda^2a_{11}^{-2}\cdot\lambda^4c_{11}a_{11}^{-2}}{\lambda^6a_{11}^{-4}}={ c_{122122}}\,.
\end{eqnarray*}
Thus ${ c_{122122}}$ is an invariant of the theory; this will imply the 3 families of the theory fall into
different local isometry types. The ordinary differential equation
$\frac{\alpha\alpha^{\prime\prime}}{\alpha^\prime\alpha^\prime}={ c_{122122}}$
has the solutions $\alpha>0$ (see, for example, Lemma 1.5.5 of  \cite{G07}) of the form:
$$\alpha(t)=\left\{\begin{array}{ll}
e^{a(t+b)}&\text{ if }{ c_{122122}}=1\\
a(t+b)^c&\text{ if }{ c_{122122}}\ne1\end{array}\right\}
\text{ for }a\ne0\text{ and }c\ne0\,.$$
Thus
\begin{equation}\label{E9.d}
f_{yy}=\left\{\begin{array}{ll}
e^{\alpha(x)(y+\beta(x))}&\text{ if }{ c_{122122}}=1\text{ for }\alpha(x)\ne0\\
\alpha(x)(y+\beta(x))^c&\text{ if }
{ c_{122122}}\ne1\text{ for }\alpha(x)\ne0\text{ and }c\ne0
\end{array}\right\}\,.\end{equation}

We wish to simplify Equation~(\ref{E9.d}) to take $\beta(x)=0$.
We consider the change of variables
$T(x,y,z)=(x,y-\beta(x),\tilde x+y\beta_x(x))$:
$$\begin{array}{lrrr}
T_*\partial_x=(&1,&-\beta_x(x),&y\beta_{xx}(x)),\\
T_*\partial_y=(&0,&1,&\beta_x(x)),\\
T_*\partial_{\tilde x}=(&0,&0,&1).
\end{array}$$
We compute:
\begin{eqnarray*}
&&g_f(T_*\partial_x,T_*\partial_x)
=-2f(x,y-\beta(x))+\beta_x^2(x)+2y\beta_{xx}(x),\\
&&g_f(T_*\partial_x,T_*\partial_y)=0,
\qquad g_f(T_*\partial_x,T_*\partial_{\tilde x})=1,\\
&&g_f(T_*\partial_y,T_*\partial_y)=1,\qquad
g_f(T_*\partial_y,T_*\partial_{\tilde x})=g_f(T_*\partial_{\tilde x},T_*\partial_{\tilde x})=0.
\end{eqnarray*}
Thus $T^*g_f=g_{\tilde f}$ where
$$\tilde f(x,y)=f(x,y-\beta(x))-\textstyle\frac12
\left\{\beta_x^2(x)+2y\beta_{xx}(x)\right\}\,.$$
Consequently,
$\tilde f_{yy}(x,y)=f_{yy}(x,y-\beta(x))$.
Thus we may assume henceforth that $\beta(x)=0$ in
Equation~(\ref{E9.d}), i.e.
$$
f_{yy}=\left\{\begin{array}{ll}
e^{\alpha(x)y}&\text{ if }c_{11}=1\text{ for }\alpha(x)\ne0\\
\alpha(x)y^c&\text{ if }c_{11}\ne1\text{ for }\alpha(x)\ne0\text{ and }c\ne0
\end{array}\right\}$$
We examine these two cases seriatim. We shall use the relations:
\begin{eqnarray}
\lambda&=&f_{yyy}f_{yy}^{-1},\quad
a_{12}=-f_{xyy}f_{yyy}^{-1},\quad
\lambda^2a_{11}^{-2}=f_{yy},\label{E9.e}\\
\lambda^4{ c_{122112}}&=&\nabla^2R(\xi_1,\xi_2,\xi_2,\xi_1;\xi_1,\xi_2)=
a_{11}^3\{f_{xyyy}+a_{12}f_{yyyy}\},\label{E9.f}\\
\lambda^4{ c_{122111}}&=&\nabla^2R(\xi_1,\xi_2,\xi_2,\xi_1;\xi_1,\xi_1)\label{E9.g}\\
&=&a_{11}^4\{f_{xxyy}+2a_{12}f_{xyyy}+a_{12}^2f_{yyyy}-f_yf_{yyy}\}\,.
\nonumber\end{eqnarray}
\medbreak\noindent{\bf Case I.} Suppose $f_{yy}=e^{\alpha(x)y}$.
Then Equation~(\ref{E9.e}) implies:
$$
\lambda=f_{yyy}f_{yy}^{-1}=\alpha(x),\
 a_{12}=-f_{xyy}f_{yyy}^{-1}=-{ y}\alpha_x(x)\alpha(x)^{-1},\
\lambda^2a_{11}^{-2}=f_{yy}=e^{\alpha(x)y}\,.
$$
We use Equation~(\ref{E9.f})
to see that:
\begin{eqnarray*}
&&f_{xyyy}+a_{12}f_{yyyy}=\partial_x\{\alpha(x)e^{\alpha(x)y}\}
-\alpha_x(x)\alpha(x)e^{\alpha(x)y}\\
&=&
\alpha_x(x)\cdot e^{\alpha(x)y}=a_{11}^{-3}\lambda^4c_{12}=\alpha(x)e^{\frac32\alpha(x)y}
{ c_{122112}}\,.
\end{eqnarray*}
It now follows that $\alpha_x(x)=0$ so $\alpha(x)=a$ is constant. Thus
we may express:
$$f(x,y)=a^{-2}e^{ay}+u(x)y+v(x)\,.$$
We then use Equation~(\ref{E9.e}) to see
$$\lambda=a,\quad a_{12}=0,\quad \lambda^2a_{11}^{-2}=e^{ay}\,.$$
Equation~(\ref{E9.g}) then leads to the identity:
$$e^{2ay}{ c_{122111}}=a_{11}^{-4}\lambda^4{ c_{122111}}=-f_yf_{yyy}=-e^{2ay}-u(x)ae^{ay}\,.$$
This implies that $u(x)=0$ and hence $f=a^{-2}e^{ay}+v(x)$. Let $w_x=v(x)$ and set:
$$\begin{array}{rrrr}
T(x,y,\tilde x)=(&x,&y,&\tilde x+2w(x)),\\
T_*\partial_x=(&1,&0,&2v(x)),\\
T_*\partial_y=(&0,&1,&0),\\
T_*\partial_{\tilde x}=(&0,&0,&1).
\end{array}$$
Let $\Theta:=(x,y,\tilde x)$ and let $\tilde\Theta:=T\Theta$.
 Under this change of variables:
\begin{eqnarray*}
&&g(T_*\partial_x,T_*\partial_x)(\tilde\Theta)
=-2a^{-2}e^{ay}-2v(x)+2v(x)=-2a^{-2}e^{ay},\\
&&g(T_*\partial_x,T_*\partial_y)(\tilde\Theta)
=g(T_*\partial_y,T_*\partial_{\tilde x})(\tilde\Theta)
=g(T_*\partial_{\tilde x},T_*\partial_{\tilde x})(\tilde\Theta)=0,\\
&&g(T_*\partial_x,T_*\partial_{\tilde x})(\tilde\Theta)
=g(T_*\partial_y,T_*\partial_y)(\tilde\Theta)=1\,.
\end{eqnarray*}
Thus we may take $f=a^{-2}e^{ay}$. Replacing $y$ by $y+y_0$ for
suitably chosen $y_0$, then replaces $f$ by $e^{ay}$ as desired.

We now show $\mathcal{M}_{e^{ay}}$ is a homogeneous space. 
Clear the previous notation and set:
$$T(x,y,\tilde x)=(\pm e^{-ay_0/2}x+x_0,y+y_0,\pm e^{ay_0/2}\tilde x
+\tilde x_0)\,.$$
Then
$$T_*\partial_x=\pm e^{-ay_0/2}\partial_x,\quad
T_*\partial_y=\partial_y,\quad
T_*\partial_{\tilde x}=\mp e^{ay_0/2}\partial_{\tilde x}\,.
$$
Set $\Theta:=(x,y,\tilde x)$ and $\tilde\Theta:=T\Theta$. We show
that $T$ is an isometry by verifying:
\begin{eqnarray*}
&&g(T_*\partial_x,T_*\partial_x)(\tilde\Theta)=-2e^{-ay_0}e^{a(y+y_0)}
=g(\partial_x,\partial_x)(P),\\
&&g(T_*\partial_x,T_*\partial_y)(\tilde\Theta)=0,\quad
g(T_*\partial_x,T_*\partial_{\tilde x})(\tilde\Theta)=1,\quad
g(T_*\partial_y,T_*\partial_y)(\tilde\Theta)=1,\\
&&g(T_*\partial_y,T_*\partial_{\tilde x})(\tilde\Theta)=0,\quad
g(T_*\partial_{\tilde x},T_*\partial_{\tilde x})(\tilde\Theta)=0.
\end{eqnarray*}
Since $(x_0,y_0,\tilde x_0)$ are arbitrary,
$\mathcal{I}(\mathcal{M}_{e^{ax}})$
acts transitively on $\mathbb{R}^3$ so this manifold is globally homogeneous.
This verifies Theorem~\ref{T1.14}~(3a).

\medbreak\noindent{\bf Case II.}
Suppose that $f_{yy}=\alpha(x)y^c$ for $\alpha(x)>0$ and $c\ne0$.
Equation~(\ref{E9.e}) yields
$$
\lambda=f_{yyy}f_{yy}^{-1}=cy^{-1},\quad
a_{12}=-f_{xyy}f_{yyy}^{-1}=-\textstyle\frac{a_x(x)y}{c\alpha(x)},
\quad\lambda^2a_{11}^{-2}=f_{yy}=\alpha(x)y^c\,.
$$
We apply Equation~(\ref{E9.f}) to see:
\begin{eqnarray*}
&&\textstyle f_{xyyy}+a_{12}f_{yyyy}=\alpha_x(x)cy^{c-1}
-\frac{\alpha_x(x)y}{c\alpha(x)}c(c-1)\alpha(x)y^{c-2}
=\alpha_x(x)y^{c-1}\\
&=&a_{11}^{-3}\lambda^4{ c_{122112}}=\alpha(x)^{3/2}y^{3/2c}cy^{-1}{ c_{122112}}\,.
\end{eqnarray*}
This implies
$$\alpha_x(x)\alpha(x)^{-3/2}=c\cdot { c_{122112}}\cdot y^{c/2}\,.$$
Consequently $\alpha_x(x)=0$ so $\alpha(x)=a$ is constant. Consequently,
$f_{yy}=ay^c$ for $c\ne0$ and $a\ne0$.
Let $P(t)$ solve the equation $P^{\prime\prime}(t)=t^c$. We then have
$$f({ x,}y)=aP(y)+u(x)y+v(x)\,.$$
We apply Equation~(\ref{E9.g}) with $a_{12}=0$:
\begin{eqnarray*}
&&a_{11}^{-4}\nabla^2R(\xi_1,\xi_2,\xi_2,\xi_1;\xi_1,\xi_1)=
-f_yf_{yyy}=-aP^\prime(y)acy^{c-1}-u(x)acy^{c-1}\\
&=&{ c_{122111}}\lambda^4a_{11}^{-4}={ c_{122111}}a^2y^{2c}\,.
\end{eqnarray*}
If $c=-1$, then $P^\prime(y)=\ln(y)$ and this relation is impossible. Consequently $c\ne-1$ and
we may conclude that $u(x)=0$.
We therefore have $f=aP(y)+v(x)$. As in Case I, the constant term is
eliminated and $a$ is set to $1$ by making a change
of variables
$$T(x,y,\tilde x)=(a^{-1/2}x,y,a^{1/2}\tilde x+2w(x))$$
where $w_x(x)=v(x)$. Thus $f=\pm\ln(y)$ or $f=\pm y^\varepsilon$ for
$\varepsilon\ne0,1,2$.

\medbreak\noindent{\bf Case II-a.} Let $f(y)=\ln(y)$; the case
$f(y)=-\ln(y)$ is similar. We know by Theorem~\ref{T1.13}
that $\mathcal{M}_f$ is not $2$-curvature homogeneous and hence
is not homogeneous. 
We clear the previous notation. For $\lambda>0$ and $(x_0,\tilde x_0)$ arbitrary, set:
$$T(x,y,\tilde x):=(\lambda x+x_0,\lambda y,\lambda\tilde x
+\tilde x_0+{ (}\lambda\ln\lambda{ )} x)\,.$$
Let $\Theta:==(x,y,\tilde x)$ and $\tilde\Theta=T\Theta$. We compute:
\begin{eqnarray*}
&&T_*\partial_x=\lambda\partial_x+\lambda\ln\lambda\partial_{\tilde x},
\quad T_*\partial_y=\lambda\partial_y,\quad
T_*\partial_{\tilde x}=\lambda\partial_{\tilde x},\\
&&g(T_*\partial_x,T_*\partial_x)(\tilde\Theta)
=\lambda^2\{-2\ln(y)-2\ln\lambda\}+2\lambda^2\ln\lambda
=\lambda^2g(\partial_x,\partial_x)(P),\\
&&g(T_*\partial_x,T_*\partial_y)(\tilde\Theta)=0,\quad
g(T_*\partial_x,T_*\partial_{\tilde x})(\tilde\Theta)=\lambda^2,\quad
g(T_*\partial_y,T_*\partial_y)(\tilde\Theta)=\lambda^2,\\
&&g(T_*\partial_y,T_*\partial_{\tilde x})(\tilde\Theta)=0,\quad
g(T_*\partial_{\tilde x},T_*\partial_{\tilde x})=0\,.
\end{eqnarray*}
This defines a transitive action by homotheties on
$\mathbb{R}\times\mathbb{R}^+\times\mathbb{R}$, 
which shows that $\mathcal{M}_{\ln(y)}$ 
is homothety homogeneous. Moreover $T$ acts by isometries on
each level set of the projection
$(x,y,\tilde x)\mapsto y$, showing that $\mathcal{M}_{\ln(y)}$ is of cohomogeneity one.

\medbreak\noindent{\bf Case II-b.} Let $f(y)=y^c$ for $c\ne0,1,2$.
Again, Theorem~\ref{T1.13} implies $\mathcal{M}_f$ is not
$2$-curvature homogeneous and hence not homogeneous. Let
$$T(x,y,\tilde x)=(\lambda^{(2-c)/2}x+x_0,\lambda y,
\mp\lambda^{2+(c-2)/2}\tilde x+\tilde x_0)\,.$$
We compute:
\begin{eqnarray*}
&&T_*\partial_x=\lambda^{(2-c)/2}\partial_x,\quad
T_*\partial_y=\lambda\partial_y,\quad
T_*\partial_{\tilde x}=\lambda^{2+(c-2)/2}\partial_{\tilde x},\\
&&g(T_*\partial_x,T_*\partial_x)({ T(x,y,\tilde x)})=
-2\lambda^{(2-c)}\lambda^cy^c=\lambda^2g(\partial_x,\partial_x)({ x,y,\tilde x}),\\
&&g(T_*\partial_x,T_*\partial_y)=0,\quad
g(T_*\partial_x,T_*\partial_{\tilde x})=\lambda^2,\quad
g(T_*\partial_y,T_*\partial_y)=\lambda^2,\\
&&g(T_*\partial_y,T_*\partial_{\tilde x})=0,\quad
g(T_*\partial_{\tilde x},T_*\partial_{\tilde x)}=0\,.
\end{eqnarray*}
Thus $T$ is a homothety; since $\lambda>0$ is arbitrary and since
$(x_0,\tilde x_0)$ are arbitrary, the group of homotheties acts transitively
on $M$. This verifies Theorem~\ref{T1.14}~(3c) and completes
the proof of Theorem~\ref{T1.14}.\hfill\qed

\section{The proof of Theorem~\ref{T1.16}}\label{sect-7}

We consider the family of Lorentzian manifolds
given in Section~\ref{S9} and consider the family of models where
$\langle\xi_1,\xi_3\rangle=\langle\xi_2,\xi_2\rangle=1$
defines the inner product on the vector space
$V=\operatorname{Span}\{\xi_1,\xi_2,\xi_3\}$.
Choose the isometry between $T_PM$ and $(V,\langle\,\cdot\,,\,\cdot\,\rangle)$
 to be defined by:
$$\xi_1=a_{11}(\partial_x+f\partial_{\tilde x}+a_{12}\partial_y
+a_{13}\partial_{\tilde x}),\quad
\xi_2=\partial_y+a_{23}\partial_{\tilde x},\quad
\xi_3=a_{11}^{-1}\partial_{\tilde x}\,.
$$
To ensure the map in question is an isometry, we require
$a_{12}^2+2a_{13}=0$ and $a_{12}+a_{23}=0$.
This normalizes the parameters $a_{13}$ and $a_{23}$; $a_{11}$ and $a_{12}$ and $\lambda$ are the free parameters
of the theory.

We take $f(x,y)=\frac12\alpha(x)y^2$. The only (possibly)
non-zero covariant derivatives are given by:
$$
\nabla^\ell R(\partial_x,\partial_y,\partial_y,\partial_x;\partial_x,\dots,\partial_x)=\alpha^{(\ell)}(x)\,.
$$
The parameter $a_{12}$ plays no role in the theory and we have
$$\nabla^\ell R(\xi_1,\xi_2,\xi_2,\xi_1;\xi_1,\dots,\xi_1)=a_{11}^{2+\ell}\alpha^{(\ell)}\,.$$

Suppose $k$ is given. Let $\alpha^{(k)}:=e^{-x^2}$ and recursively define
\begin{equation}\label{E10.a}
\alpha^{(\ell)}(x):=\int_{t=-\infty}^x\alpha^{(\ell+1)}(t)dt\text{ for }0\le\ell\le k-1\,.
\end{equation}
For $x\le -1$, $-x^2\le x$ and consequently $\alpha^{(k)}\le e^{x}$.
Inductively, suppose $\alpha^{(\ell)}\le e^{x}$
for $x\le-1$. Then integration yields $\alpha^{(\ell-1)}\le e^x$ for $x\le-1$ as well. Thus the
integrals in Equation~(\ref{E10.a}) converge and $\alpha$ is a smooth function.
We have $\alpha^{(\ell)}>0$ for $0\le\ell\le k$; setting the functions $a_{11}^{\ell}=\{\alpha^{(\ell)}\}^{{ -}1/(2+\ell)}$ then yields
$$\nabla^\ell R(\xi_1,\xi_2,\xi_2,\xi_1;\xi_1,\dots,\xi_1)=1$$
and shows $\mathcal{M}_{\frac{1}{2}\alpha(x)y^2}$ is variable $k$-curvature homogeneous. Since, however,
$\alpha^{(k+1)}$ vanishes when $x=0$ and is non-zero when $x\ne0$,
$\mathcal{M}_{\frac{1}{2}\alpha(x)y^2}$ is
not variable $k+1$-curvature homogeneous on any neighborhood
of $0$. This establishes
Theorem~\ref{T1.16}~(1).

We take $\alpha(x)=e^x$. Then $\alpha^{(k)}(x)>0$ for all $x$ and for all $k$. Thus
the argument given above shows $\mathcal{M}_{\frac{1}{2}e^xy^2}$ is variable $k$-curvature homogeneous
for all $k$. On the other hand, by Theorem~\ref{T1.14}~(2),
$\mathcal{M}_{\frac{1}{2}e^xy^2}$ is not homothety 1-curvature homogeneous.
This establishes Theorem~\ref{T1.16}~(2).

We consider cases to establish Theorem~\ref{T1.16}~(3):
\begin{enumerate}
\item We use Theorem~\ref{T1.13}~(1b)
and Theorem~\ref{T1.14}~(1) to see that
the implication $(1b)\Rightarrow(1a)$ fails. This also shows the implication $(2b)\Rightarrow(1a)$ fails.
\item If we take $f=\frac12e^xy^2$, then
this is variable $k$-curvature homogeneous for all $k$ by Theorem~\ref{T1.16}~(2)
but not, by Theorem~\ref{T1.14}~(2) homothety 1-curvature homogeneous.
 Consequently the implications
$(2a)\Rightarrow(1a)$, $(2a)\Rightarrow(1b)$, $(2b)\Rightarrow(1a)$, and $(2b)\Rightarrow(1b)$
fail.
\item Let $(S^2,g_2)$ denote the standard round sphere in $\mathbb{R}^3$. Let
$$\mathcal{M}_t:=(M,g_t):=(\mathbb{R}\times S^2,e^{tx}(dx^2+g_2))$$
where $t\in(0,\epsilon)$ is a small positive real
parameter. We showed previously that this is homothety homogeneous
and hence satisfies (1b) for all $k$. We also showed it was not
$0$-curvature homogeneous for generic values of $t$. Note that
variable $0$-curvature and $0$-curvature homogeneous are the same.
Thus the implications $(1b)\Rightarrow(2a)$ and $(2b)\Rightarrow(2a)$ fail.
\end{enumerate}

\section*{Acknowledgments}
Research of all the authors was partially supported by programs with FEDER funds of Xunta de Galicia (Grant GRC2013-045). Research of P. Gilkey and S. Nik\v cevi\'c was also
partially supported by project 174012 (Serbia).

\end{document}